\newtheorem{Theorem}{Theorem}[section]
\newtheorem{Proposition}[Theorem]{Proposition}
\newtheorem{Lemma}[Theorem]{Lemma}
\newtheorem{Corollary}[Theorem]{Corollary}
\theoremstyle{definition}
\newtheorem{Definition}[Theorem]{Definition}
\newtheorem{Remark}[Theorem]{Remark}
\newcommand{\bTheorem}[1]{
\begin{Theorem} \label{T#1} }
\newcommand{\eT}{\end{Theorem}}
\newcommand{\bProposition}[1]{
\begin{Proposition} \label{P#1}}
\newcommand{\eP}{\end{Proposition}}
\newcommand{\bLemma}[1]{
\begin{Lemma} \label{L#1} }
\newcommand{\eL}{\end{Lemma}}
\newcommand{\bCorollary}[1]{
\begin{Corollary} \label{C#1} }
\newcommand{\eC}{\end{Corollary}}
\newcommand{\bRemark}[1]{
\begin{Remark} \label{R#1} }
\newcommand{\eR}{\end{Remark}}
\newcommand{\dy}{{\rm d}y}
\newcommand{\bDefinition}[1]{
\begin{Definition} \label{D#1} }
\newcommand{\eD}{\end{Definition}}
\newcommand{\Q}{\mathbb{T}^d}
\newcommand{\K}{\mathcal{K}}
\newcommand{\Pim}{\Pi_{\mathcal{T}}}
\newcommand{\tvU}{\widetilde{\vU}}
\newcommand{\jump}[1]{\left[ \left[ #1 \right] \right]}
\newcommand{\vrh}{\vr_h}
\newcommand{\vrhn}{\vr_{h_n}}
\newcommand{\vmh}{\vm_h}
\newcommand{\vmhn}{\vm_{h_n}}
\newcommand{\tvm}{\tilde{\vc{m}}}
\newcommand{\bfphi}{\boldsymbol{\varphi}}
\newcommand{\Eh}{E_h}
\newcommand{\ds}{\,\mathrm{d}S(x)}
\newcommand{\bFormula}[1]{
\begin{equation} \label{#1}}
\newcommand{\eF}{\end{equation}}
\newcommand{\facesint}{\mathcal{E}}
\newcommand{\grid}{\mathcal{T}}
\newcommand{\facesK}{\mathcal{E}(K)}
\newcommand{\vuh}{\vu_h}
\newcommand{\intSh}[1] {\int_{\sigma} #1 \ds }
\newcommand{\TS}{\Delta t}
\newcommand{\co}[2]{{\rm co}\{ #1 , #2 \}}
\newcommand{\Ov}[1]{\overline{#1}}
\newcommand{\aleq}{\stackrel{<}{\sim}}
\newcommand{\toNY}{\xrightarrow{\mathcal{N}(Y)}}
\newcommand{\toKY}{\xrightarrow{\mathcal{K}(Y)}}
\newcommand{\toWK}{\xrightarrow{weak-\mathcal{K}}}
\newcommand{\toSK}{\xrightarrow{strong-\mathcal{K}}}
\newcommand{\toN}{\xrightarrow{\mathcal{N}}}
\newcommand{\vr}{\varrho}
\newcommand{\tvr}{\tilde \vr}
\newcommand{\vu}{\vc{u}}
\newcommand{\vm}{\vc{m}}
\newcommand{\vn}{\vc{n}}
\newcommand{\vc}[1]{{\bf #1}}
\newcommand{\Div}{{\rm div}_x}
\newcommand{\Grad}{\nabla_x}
\newcommand{\dx}{\,{\rm d} {x}}
\newcommand{\dt}{\,{\rm d} t }
\newcommand{\vU}{\vc{U}}
\newcommand{\intO}[1]{\int_{\Q} #1 \ \dx}
\newcommand{\vv}{\vc{v}}
\newcommand{\D}{{\rm d}}
\newcommand{\ep}{\varepsilon}
\def\softd{{\leavevmode\setbox1=\hbox{d}%
          \hbox to 1.05\wd1{d\kern-0.4ex{\char039}\hss}}}
\definecolor{Cgrey}{rgb}{0.85,0.85,0.85}
\definecolor{Cblue}{rgb}{0.50,0.85,0.85}
\definecolor{Cred}{rgb}{1,0,0}
\definecolor{fancy}{rgb}{0.10,0.85,0.10}
\newcommand\Cbox[2]{%
    \newbox\contentbox%
    \newbox\bkgdbox%
    \setbox\contentbox\hbox to \hsize{%
        \vtop{
            \kern\columnsep
            \hbox to \hsize{%
                \kern\columnsep%
                \advance\hsize by -2\columnsep%
                \setlength{\textwidth}{\hsize}%
                \vbox{
                    \parskip=\baselineskip
                    \parindent=0bp
                    #2
                }%
                \kern\columnsep%
            }%
            \kern\columnsep%
        }%
    }%
    \setbox\bkgdbox\vbox{
        \color{#1}
        \hrule width  \wd\contentbox %
               height \ht\contentbox %
               depth  \dp\contentbox
        \color{black}
    }%
    \wd\bkgdbox=0bp%
    \vbox{\hbox to \hsize{\box\bkgdbox\box\contentbox}}%
    \vskip\baselineskip%
}
\date{}
\begin{document}

%%%%%%%%%%%%%%%%%%%%%%%%%%%%%%%%

\title{$\K-$convergence as a new tool in  numerical analysis}

\author{Eduard Feireisl\thanks{The research of E.F. and H.M.~leading to these results has received funding from the
Czech Sciences Foundation (GA\v CR), Grant Agreement
18--05974S. The Institute of Mathematics of the Academy of Sciences of
the Czech Republic is supported by RVO:67985840.\newline
\hspace*{1em} $^\spadesuit$M.L. has been funded by the Deutsche Forschungsgemeinschaft (DFG, German Research Foundation) - Project number 233630050 - TRR 146 as well as by  TRR 165 Waves to Weather.} $^{, \clubsuit}$
\and M\' aria Luk\' a\v cov\' a -- Medvi\softd ov\' a$^{\spadesuit}$ \and
Hana Mizerov\' a$^{*, \dagger}$
}

\date{\today}

\maketitle

\bigskip

\centerline{$^*$ Institute of Mathematics of the Academy of Sciences of the Czech Republic}
\centerline{\v Zitn\' a 25, CZ-115 67 Praha 1, Czech Republic}
\centerline{feireisl@math.cas.cz, mizerova@math.cas.cz}

\bigskip

\centerline{$^\clubsuit$ Institute of Mathematics, TU Berlin}
\centerline{Strasse des 17. Juni, Berlin, Germany}

\bigskip
\centerline{$^\spadesuit$ Institute of Mathematics, Johannes Gutenberg-University Mainz}
\centerline{Staudingerweg 9, 55 128 Mainz, Germany}
\centerline{lukacova@uni-mainz.de}

\bigskip
\centerline{$^\dagger$ Department of Mathematical Analysis and Numerical Mathematics}
\centerline{Faculty of Mathematics, Physics and Informatics of the Comenius University}
\centerline{Mlynsk\' a dolina, 842 48 Bratislava, Slovakia}

\begin{abstract}

We adapt the concept of $\K-$convergence of Young measures to the sequences of
approximate solutions resulting from numerical schemes. We obtain new results on pointwise convergence of numerical
solutions in the case when solutions of the limit continuous problem possess minimal regularity. We apply the abstract theory
to a  finite volume method for the isentropic Euler system describing the motion
of a compressible inviscid fluid. The result can be seen as a nonlinear version of the fundamental Lax equivalence theorem.

\end{abstract}

{\bf Keywords:} Young measure,  dissipative solutions, $\K-$convergence, finite volume  method, isentropic Euler system, consistency, stability

\tableofcontents

\section{Introduction}
\label{I}

A celebrated deep result of Koml\' os \cite{Kom} asserts that any sequence $\{ F_n \}_{n=1}^\infty$
of uniformly $L^1-$bounded real valued functions on a set $Q \subset R^K$ admits a subsequence $\{ F_{n_k} \}_{k=1}^\infty$ such that the arithmetic averages
$\frac{1}{N} \sum_{k=1}^N  F_{n_k}$ converge a.a. to a function $F \in L^1(Q)$.
Moreover, any subsequence of $\{ F_{n_k} \}_{k=1}^\infty$ enjoys the same property. The result has been adapted by Balder \cite{Bald} who introduced the concept
of $\K$(Koml\' os)-convergence for sequences of Young measures.
The use of Young measures in analysis of nonlinear PDEs is not new, see, e.g., DiPerna \cite{Diperna83, Diperna85} or Tartar \cite{t1,t2}.
More recently, the Young measures have been used as an efficient tool in the analysis of certain numerical schemes, see \cite{FeiLuk}, \cite{FLM}, \cite{FeiLukMizShe} or, from a rather different point of view, Fjordholm et al. \cite{FjKaMiTa}, \cite{FjMiTa1}. Our goal in the present paper is to extend the notion of $\K-$convergence
to families of numerical solutions to problems arising in continuum fluid dynamics. As an iconic example, we have chosen the isentropic Euler system describing the time evolution of a compressible  inviscid fluid.

\subsection{Young measures and numerical analysis of essentially ill--posed problems}
\label{YN}

The equations describing the motion of inviscid fluids in  continuum mechanics give rise to mathematically ill--posed
problems. There is a large number of convincing examples that the Euler system
admits infinitely many physically admissible (weak) solutions, in particular in the natural multidimensional setting,
for a generic class of initial data,
see Chiodaroli et al. ~\cite{Chiod, ChiDelKre, ChiKre, ChKrMaSwI}, among others.
In the light of these results, the question of \emph{convergence} of the approximate schemes used in the numerical analysis of the Euler system becomes of fundamental importance. It is our goal in the present paper to address these issues in a new framework
based on the theory of measure--valued solutions. In particular, we adapt the concept of $\K-$convergence, developed in the context of Young measures by Balder \cite{Bald}, to show:
\begin{itemize}
\item \emph{pointwise} convergence of arithmetic averages (Cesaro means) of numerical solutions to a generalized (dissipative) solution of the limit system, even if the
latter may admit oscillatory (wild) solutions;
\item criteria for \emph{unconditional} convergence in the case the limit system is not uniquely solvable.
\end{itemize}

We develop a general framework based on the theory of Young measures to study problems of convergence of numerical methods.
To illustrate the implications of abstract results, we apply the theory to a finite volume method for the isentropic Euler system.
In particular, we show that the arithmetic averages of numerical solutions converge pointwise to a generalized \emph{dissipative} solution of the Euler system introduced in \cite{BreFeiHof19}. We also show how the presence of
oscillations in families of numerical solutions may provide an evidence that the limit problem exhibits singularities.

 Our approach bears some similarities with the recent works of Fjordholm et al.~\cite{FjKaMiTa, FjMiTa1}, who studied the convergence of entropy stable finite volume schemes to a measure--valued solution of the Euler equations.  The main difference lies in the way of averaging procedure. While Fjordholm et al.~average over different solutions of the initially perturbed problems and investigate only the convergence of statistical modes, we introduce a new concept of
$\K-$convergence yielding the averaging over numerical solutions for different mesh steps without any perturbation of initial data.

\subsection{General strategy}

We start by recalling the necessary preliminary material from the theory of Young measures in Section \ref{P}. In particular, we introduce the concept of narrow and $\K-$convergence for Young measures.

In Section \ref{A}, we set up a general framework for studying convergence of numerical schemes. We extend the concept of $\K-$convergence to sequences of numerical solutions
by identifying them with sequences of Young measures.
We distinguish between \emph{weak$-\K$} or \emph{strong$-\K$ convergence} reflecting presence or absence of oscillations, respectively.
In particular, we recover a variant of Koml\' os' result applied to sequences of numerical solutions.

In Section \ref{E}, we collect the necessary theoretical results concerning the isentropic Euler system.
Following \cite{BreFeiHof19} we introduce the concept of dissipative solution that can be seen as a barycenter of the Young measure associated to a suitable
measure--valued solution of the limit system. We also state the relevant weak--strong uniqueness principle.

Finally, in Section \ref{N}, we apply the abstract theory to
a finite volume scheme for the isentropic Euler system. We show that (up to a subsequence) numerical solutions $\K-$converge to a dissipative solution in the asymptotic limit of vanishing discretization step, see Theorem \ref{NT1}. We also identify a large class of weak solutions to the
Euler system for which the scheme converges unconditionally and pointwise, see Theorem \ref{NT2}. Finally, we clarify how the presence of oscillations in the
sequence of numerical solutions may indicate singularities for the limit system, see Theorem \ref{NT4}.

\section{Preliminaries from the theory of Young measures}
\label{P}

We recall some basic facts from the theory of Young measures, in particular the concept of narrow and $\K-$convergence. The reader
may consult the monographs by Balder \cite{Bald} or Pedregal \cite{PED1} for details.

\subsection{Physical space, phase space}

We consider problems defined on the \emph{physical space} $Q \subset R^K$. In the case of evolutionary differential equations, the \emph{physical
space} coincides with the space--time cylinder,
\[
Q = \left\{ (t,x)\ \Big| \ t \in (0,T), \ x \in \Omega \subset R^d \right\},\ d = 1,2,3,\ K = d+1
\]
where $(0,T)$ is the relevant time interval and $\Omega$ the spatial domain.

The \emph{phase space} $S \subset R^M$ characterizes the state of the modeled system at  any given time and spatial position.
For instance the density $\vr$ and the velocity $\vu$ in the models of compressible fluids range in the phase space
\[
{S} = \left\{ [\vr, \vu]\ \Big| \  \vr \in [0, \infty),\ \vu \in R^d \right\} \subset R^{d+1},\ d=1,2,3, \ M = d+1.
\]

If not stated otherwise, the physical space $Q$ will be a \emph{bounded} domain in $R^K$ and its elements denoted by the symbol $y \in Q$;
the phase space $ S$ will be a Borel subset of $R^M$, with elements $\vU \in S$. Most of the applications presented below can be extended to a general (unbounded) open set in $Q \in R^K$.

\subsection{Probability measures}

Let $\mathcal{P} (S)$ denote the space of all regular Borel probability measures $\mathcal{V}$ on $R^M$ such that
\[
\mathcal{V} (S) = 1.
\]
In view of the Riesz representation theorem, any $\mathcal{V}$ can be identified with a non--negative linear functional
on the space $C_c(R^M)$, we denote
\[
\left< \mathcal{V} ; g \right> \equiv
\left< \mathcal{V} ; g(\tvU) \right> = \int_S g (\vU) \ \D \mathcal{V}(\vU) \ \mbox{for any}\ g \in C_c(R^M).
\]
If $\mathcal{V}$ has finite first moment, we denote
\[
\left< \mathcal{V} ; \tvU \right> \in S
\
\mbox{-- the \emph{barycenter} of}\ \mathcal{V} \ \mbox{on}\ S.
\]
\begin{Definition}[Narrow convergence] \label{PD1}

Let $\{ \mathcal{V}_n \}_{n=1}^\infty$ be a sequence of probability measures in $\mathcal{P}(S)$. We say that
$\{ \mathcal{V}_n \}_{n=1}^\infty$ \emph{narrowly converges} to a measure $\mathcal{V}$,
\[
\mathcal{V}_n \toN \mathcal{V} \in \mathcal{P}(S),
\]
if
\[
\left< \mathcal{V}_n; g \right> \to \left< \mathcal{V}; g \right> \ \mbox{for any}\ g \in C_c(R^M).
\]

\end{Definition}

\begin{Remark} \label{PR1}

Narrow convergence is weak-(*) convergence if we identify $\mathcal{P}(S)$ with a bounded subset of the space of (bounded) Radon measures
$\mathcal{M}_b(R^M)$,
\[
\mathcal{M}_b(R^M) = [C_0(R^M)]^*.
\]
A necessary and sufficient condition for the limit to be
a probability measure is \emph{uniform tightness} of the sequence $\{ \mathcal{V}_n \}_{n=1}^\infty$:
For any $\ep > 0$, there exists a compact set $K \subset R^M$ and $m = m(\ep, K)$ such that
\[
\mathcal{V}_n (K) > 1 - \ep \ \mbox{for all}\ n \geq m(\ep, K).
\]

\end{Remark}

\subsection{Young measures}

A Young measure $\mathcal{V}_Q = \left\{ \mathcal{V}_y \right\}_{y \in Q}$ is a family of probability measures
$\mathcal{V}_y \in \mathcal{P}(S)$ parametrized by $y \in Q$. More specifically,

\begin{Definition}[Young measure] \label{PD2}

A \emph{Young measure} $\mathcal{V}_Q$ on the set $Q$ is a mapping
\[
\mathcal{V}_Q = \{ \mathcal{V}_y \}_{y \in Q},\
\mathcal{V}_y : y \in Q \mapsto \mathcal{V}_y \in \mathcal{P}(S)
\]
that is  weakly-(*) measurable,
\[
\mathcal{V}_Q \in L^\infty_{{\rm weak-(*)}}(Q; \mathcal{M}_b(R^M)),
\]
meaning the function
\[
y \in Q \mapsto \left< \mathcal{V}_y; g \right>
\]
is measurable for any $g \in C_c(R^M)$.

\end{Definition}

\begin{Definition}[Narrow convergence of Young measures] \label{PD3}

Let $\{ \mathcal{V}^n_Q  \}_{n = 1}^\infty$ be a sequence of Young measures on $Q$. We say that
$\{ \mathcal{V}^n_Q \}_{n = 1}^\infty$ \emph{narrowly convergences} to
a Young measure $\mathcal{V}_Q $,
\[
\mathcal{V}^n_Q \toNY  \mathcal{V}_Q \ \mbox{as}\ n \to \infty,
\]
if
\[
\int_Q \left< \mathcal{V}^n_y, g(y, \cdot ) \right>  \dy
\to \int_Q \left< \mathcal{V}_y, g(y, \cdot ) \right>  \dy
\ \mbox{for any}\ g \in L^1(Q; C_0(R^M)).
\]

\end{Definition}

\begin{Remark} \label{PR2}

Narrow convergence is weak-(*) convergence if we identify the space $L^{\infty}(Q; \mathcal{M}_b(R^M))$ with the dual
to $[L^1(Q; C_0(R^M))]^*$. The limit is again a Young measure if the averages
\[
\frac{1}{|Q|}  \int_Q \mathcal{V}^n_y \dy \in \mathcal{P}(S)
\]
are uniformly tight in $\mathcal{P}(S)$. If $|Q| = \infty$ we require the above relation to hold for any compact $\tilde{Q}
\subset Q$, $|\tilde{Q}| < \infty$.

\end{Remark}

\subsubsection{$\K-$convergence}

Following Balder \cite{Bald}, we introduce the $\K-$convergence of Young measures.

\begin{Definition} [$\K-$convergence of Young measures] \label{PD4}

Let $\{ \mathcal{V}^n_Q \}_{n = 1}^\infty$ be a sequence of Young measures on $Q$. We say that
$\{ \mathcal{V}^n_Q \}_{n = 1}^\infty$ \emph{$\K-$convergences} to
a Young measure $\mathcal{V}_Q$,
\[
\mathcal{V}_Q^n \toKY  \mathcal{V}_Q\ \mbox{as}\ n \to \infty,
\]
if for any subsequence $\left\{ \mathcal{V}^{n_k}_Q \right\}_{k=1}^\infty$,
\[
\frac{1}{N} \sum_{k = 1}^N \mathcal{V}^{n_k}_y \toN \mathcal{V}_y \ \mbox{as}\ N \to \infty\ \mbox{for a.a.}\ y \in Q.
\]

\end{Definition}

We report the following result, see Balder \cite{Bald}.

\begin{Proposition} \label{PP1}
Let $\{ \mathcal{V}^n_Q \}_{n = 1}^\infty$ be a sequence of Young measures on $Q$ such that
\[
\mathcal{V}_Q^n \toKY  \mathcal{V}_Q.
\]

Then
\[
\mathcal{V}_Q^n \toNY  \mathcal{V}_Q.
\]

\end{Proposition}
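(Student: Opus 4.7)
The plan is to combine Balder's (or really Koml\'os') subsequence principle with a standard Tauberian observation: a bounded real sequence converges to $a$ whenever every subsequence has Ces\`aro averages tending to $a$. Fix a test function $g \in L^1(Q; C_0(R^M))$ and set
\[
I_n := \int_Q \left< \mathcal{V}^n_y, g(y, \cdot) \right> \dy, \qquad
I := \int_Q \left< \mathcal{V}_y, g(y, \cdot) \right> \dy.
\]
Since every $\mathcal{V}^n_y$ is a probability measure, $|I_n| \leq \|g\|_{L^1(Q; C_0(R^M))}$, so $\{I_n\}_{n=1}^\infty$ is bounded in $\R$; the goal is to show $I_n \to I$.

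First I would pick an arbitrary subsequence $\{\mathcal{V}^{n_k}_Q\}_{k=1}^\infty$. The $\K$-convergence hypothesis yields $\frac{1}{N}\sum_{k=1}^N \mathcal{V}^{n_k}_y \toN \mathcal{V}_y$ for a.a.\ $y \in Q$. The narrow convergence in Definition \ref{PD1} is stated on $C_c(R^M)$, but because both the Ces\`aro averages and the limit $\mathcal{V}_y$ are probability measures, a standard tightness/density argument upgrades the testing class to $C_0(R^M)$; hence
\[
\frac{1}{N}\sum_{k=1}^N \left< \mathcal{V}^{n_k}_y, g(y, \cdot) \right> \longrightarrow \left< \mathcal{V}_y, g(y, \cdot) \right> \quad \text{for a.a.\ } y \in Q.
\]
The integrands are pointwise dominated by the $L^1(Q)$ function $y \mapsto \|g(y, \cdot)\|_{C_0(R^M)}$, so Lebesgue's dominated convergence theorem gives $\frac{1}{N}\sum_{k=1}^N I_{n_k} \to I$ for every chosen subsequence.

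Second, I would invoke the elementary fact: if $\{a_n\} \subset \R$ is bounded and every subsequence $\{a_{n_k}\}$ satisfies $\frac{1}{N}\sum_{k=1}^N a_{n_k} \to a$, then $a_n \to a$. Indeed, otherwise some subsequence stays outside an $\ep$-neighbourhood of $a$; by Bolzano--Weierstrass it has a further convergent sub-subsequence with limit $L \neq a$, whose Ces\`aro averages also tend to $L$, contradicting the hypothesis. Applied to $a_n = I_n$ and $a = I$, this produces $I_n \to I$; since $g$ was arbitrary in $L^1(Q; C_0(R^M))$, this is exactly $\mathcal{V}^n_Q \toNY \mathcal{V}_Q$.

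The only delicate point, and the one I expect to require the most care, is the upgrade from narrow convergence tested on $C_c(R^M)$ (as in Definition \ref{PD1}) to testing on $C_0(R^M)$ integrands appearing in the definition of narrow convergence of Young measures; this is where we must use that the limit $\mathcal{V}_y$ is a genuine probability measure (so no mass escapes at infinity), together with the uniform tightness remark noted in Remark \ref{PR1}. Everything else is a straightforward chain of dominated convergence and the Ces\`aro--Tauberian subsequence principle.
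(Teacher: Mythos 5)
The paper does not actually prove Proposition \ref{PP1}; it is stated as a reported result with a citation to Balder, so there is no in-paper argument to compare against. Your self-contained proof is correct: the reduction of narrow convergence of the Young measures to convergence of the scalar sequence $I_n$, the use of the $\K$-hypothesis on every subsequence combined with dominated convergence (the integrands are bounded by $y \mapsto \| g(y,\cdot)\|_{C_0(R^M)} \in L^1(Q)$ because each $\mathcal{V}^{n}_y$ is a probability measure), and the Ces\`aro--Tauberian subsequence principle (bounded sequence, every subsequence Ces\`aro-summable to the same $a$, hence convergent to $a$) all hold, and this is essentially the standard route to Balder's result. One small remark: the step you flag as delicate --- passing from test functions in $C_c(R^M)$ to $C_0(R^M)$ --- needs less than you suggest. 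Since $C_c(R^M)$ is dense in $C_0(R^M)$ in the uniform norm and all the measures involved (the Ces\`aro averages and the limit $\mathcal{V}_y$) have total mass $1$, a $3\ep$ density argument already gives convergence against $C_0$ integrands; tightness would only be needed to reach bounded continuous test functions, or to guarantee that the limit is a probability measure, which here is part of the hypothesis that $\mathcal{V}_Q$ is a Young measure.
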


A fundamental result  is a version of Prokhorov compactness theorem for Young measures,  see Prokhorov \cite{Prokh} and Balder \cite{Bald1}:

\begin{Proposition} \label{PP2}

Let $\{ \mathcal{V}^n_Q \}_{n=1}^\infty$ be a sequence of Young measures such that
the family of probability measures
\begin{equation} \label{P1}
\frac{1}{|Q|} \int_Q \mathcal{V}_y^n \ \dy \in \mathcal{P} (S)
\end{equation}
is tight uniformly in $n=1,2,\dots$.

Then there is a subsequence $n_k \to \infty$ and a Young measure
$\mathcal{V}_Q$
such that
\[
\mathcal{V}^{n_k}_Q \toKY \mathcal{V}_Q \ \mbox{as}\ k \to \infty.
\]

\end{Proposition}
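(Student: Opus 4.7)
The proof reduces $\K$-convergence of measures to a scalar Koml\'os theorem applied to a countable dense family of test functions. Since $C_0(\mr^M)$ is separable, I would fix a countable dense subset $\{g_j\}_{j=1}^\infty$ and set $f_j^n(y) := \langle \mathcal{V}^n_y ; g_j \rangle$. The pointwise bound $|f_j^n(y)| \leq \|g_j\|_\infty$ makes $\{f_j^n\}_n$ bounded in $L^1(Q)$ when $|Q| < \infty$ (otherwise one localizes to an exhausting sequence of balls and diagonalizes over them as well). Applying Koml\'os' theorem to $\{f_1^n\}_n$ yields a subsequence $\{n_k^{(1)}\}$ and $F_1 \in L^1(Q)$ such that every further subsequence of $\{f_1^{n_k^{(1)}}\}$ has Cesaro averages converging a.e.\ to $F_1$; iterating on $f_2, f_3,\dots$ and passing to the diagonal $n_k := n_k^{(k)}$ produces a single subsequence enjoying the Koml\'os property simultaneously for every $g_j$, with a.e.\ limits $\{F_j\}$ in $L^1(Q)$.

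Next I would construct the limit Young measure. Outside a single null set $E = \bigcup_j E_j$, the assignment $g_j \mapsto F_j(y)$ defines a positive linear functional of norm $\leq 1$ on a dense subset of $C_0(\mr^M)$, which extends by continuity and, via Riesz, is represented by a non-negative Radon measure $\mathcal{V}_y$ with $\mathcal{V}_y(\mr^M)\leq 1$. To upgrade $\mathcal{V}_y$ to a probability measure a.e., I would invoke the tightness hypothesis \eqref{P1}: for each $m$ pick $\chi_m \in C_c(\mr^M)$, $0 \leq \chi_m \leq 1$, with
\[
\frac{1}{|Q|}\int_Q \langle \mathcal{V}^n_y ; \chi_m \rangle \dy \geq 1 - \tfrac{1}{m} \quad \text{for every } n,
\]
include each $\chi_m$ in the list $\{g_j\}$, take the Cesaro average along $\{n_k\}$, and apply dominated convergence (the integrands are bounded by $1$) to obtain $|Q|^{-1}\int_Q \langle \mathcal{V}_y ; \chi_m \rangle \dy \geq 1 - 1/m$. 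Letting $m \to \infty$ forces $\mathcal{V}_y(\mr^M) = 1$ for a.e.\ $y$.

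Finally I would verify the defining properties of $\mathcal{V}_Q$. Weak-(*) measurability of $y \mapsto \mathcal{V}_y$ follows because each $F_j(y) = \langle \mathcal{V}_y ; g_j \rangle$ is a measurable a.e.\ limit of measurable functions, and the dense family $\{g_j\}$ determines measurability against an arbitrary $g \in C_0(\mr^M)$ through uniform approximation. For any further subsequence $\{n_{k_\ell}\}$ of the diagonal $\{n_k\}$, the Koml\'os property supplies a null set $E'$ (depending on that subsequence) such that, on $Q \setminus (E \cup E')$, the Cesaro averages $\frac{1}{N}\sum_{\ell=1}^N f_j^{n_{k_\ell}}(y)$ tend to $F_j(y) = \langle \mathcal{V}_y ; g_j \rangle$ for every $j$. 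The uniform estimate $|\langle \mu ; g\rangle - \langle \mu ; g_j\rangle | \leq \|g - g_j\|_\infty$, valid for any probability measure $\mu$, then transfers this convergence from the dense set to all of $C_0(\mr^M)$, yielding the narrow convergence $\frac{1}{N}\sum_{\ell=1}^N \mathcal{V}^{n_{k_\ell}}_y \toN \mathcal{V}_y$ required by Definition~\ref{PD4}.

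The main technical obstacle is the mass-preservation step of the second paragraph: classical Koml\'os produces only a sub-probability limit functional, and the uniform tightness hypothesis \eqref{P1} is precisely what is needed to prevent escape of mass to infinity in the Cesaro limit. The diagonal extraction and the density/continuity arguments used to pass between the countable dense family and all of $C_0(\mr^M)$ are routine once the scalar Koml\'os theorem is taken for granted.
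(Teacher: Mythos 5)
The paper offers no proof of Proposition \ref{PP2}: it is stated as a known result, with references to Prokhorov \cite{Prokh} and Balder \cite{Bald1}, so there is nothing in the text to compare your argument against step by step. What you have written is essentially a correct reconstruction of the standard (Balder-style) proof: the reduction to the scalar Koml\'os theorem through a countable family $\{g_j\}$ dense in $C_0(R^M)$, the diagonal extraction, the Riesz representation of the a.e.\ limit functional as a sub-probability measure, and the use of the uniform tightness \eqref{P1} to prevent loss of mass in the Ces\`aro limit are exactly the right ingredients. Two points deserve to be made explicit. First, to obtain a \emph{linear} functional from the limits $F_j$ you should take $\{g_j\}$ closed under rational linear combinations, or equivalently note that $|F_j(y) - F_{j'}(y)| \le \| g_j - g_{j'} \|_\infty$ a.e., extend by uniform continuity to all of $C_0(R^M)$, and verify linearity and positivity of the extension by density; a generic countable dense set need not contain any linear relations, so the functional is not automatically linear on it. Second, the hereditary clause of Koml\'os' theorem (every further subsequence of the extracted subsequence has the same a.e.\ Ces\`aro limit) is indispensable both for your diagonal step and for meeting Definition \ref{PD4}, which quantifies over \emph{all} subsequences; you invoke it correctly, but the weaker statement that some subsequence has convergent Ces\`aro means would not suffice. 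With these routine points filled in, the argument is complete.
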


Note that Proposition \ref{PP2} is considerably stronger than the so--called \emph{Fundamental Theorem of the theory of Young measures}
(see Ball \cite{Ball2} or Pedregal \cite{PED1}) that asserts only $\mathcal{N}(Y)$ convergence under the same assumptions.
The present result can be seen as a variant of the celebrated theorem by Koml\' os \cite{Kom} ($\K-$convergence) on a sequence of uniformly bounded real functions.

\begin{Remark} \label{PR3}

The result can be extended on locally compact physical spaces $ Q = \cup_{k=1}^\infty Q_k$, $Q_k$ compact, by requiring
\eqref{P1} on any $Q_k$.

\end{Remark}

\section{$\K-$convergence for sequences of functions}

\label{A}

Our goal is to set up a general framework to extend the concept of $\K-$convergence to sequences of numerical solutions.
To this end we will introduce the concept of $\K-$convergence for sequences of functions.

For $\vU \in S$ we denote $\delta_{\vU} \in \mathcal{P}(S)$ the Dirac mass supported by $\vU$.
Similarly, we can associate to any measurable function $\vU: Q \to S$ a Young measure $\delta_{\vU}$,
\[
(\delta_{\vU})_y = \delta_{\vU(y)} \ \mbox{for a.a.}\ y \in Q.
\]
Conversely, if a Young measure $\mathcal{V}_Q$ has finite first moments a.a. in $Q$, we can  associate to it a measurable function
$\vU:  Q  \to S $ defined through barycenters (expectations),
\begin{equation} \label{NY1}
\vU(y) = \left< \mathcal{V}_y; \tvU \right> \ \in S,\  \mbox{for a.a.}\ y \in Q.
\end{equation}

\subsection{Convergence in averages}

\begin{Definition}[weak$-\K$ property]
\label{D-weak_K}
We say that a sequence of functions $\{\vU_n\}_{n=1}^\infty,$ $\vU_n \in L^1(Q;S),$ has a \emph{weak$-\K$ property} if there exists a subsequence $n_k\to\infty$ such that the associated  Young measures
\begin{align*}
\delta_{\vU_{n_k}} \toKY \mathcal V_Q \quad \mbox{(in the sense of Definition~\ref{PD4}),}
\end{align*}
where $\mathcal V_Q$ is a Young measure on $Q.$
\end{Definition}

As an immediate consequence of Proposition~\ref{PP2} we get:

\begin{Corollary}
Any sequence $\{\vU_n\}_{n=1}^\infty$ satisfying
\begin{align*}
\|\vU_n\|_{L^1(Q;S)} \leq \overline{\vU} \quad \mbox{uniformly for } n=1,\ldots
\end{align*}
has the weak$-\K$ property.
\end{Corollary}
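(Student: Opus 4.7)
The goal is to exhibit a subsequence $n_k \to \infty$ for which $\delta_{\vU_{n_k}} \toKY \mathcal{V}_Q$, and the natural tool is Proposition~\ref{PP2}. So the plan is to reduce the Corollary to verification of the uniform tightness hypothesis \eqref{P1} for the sequence of Young measures $\mathcal{V}^n_Q := \delta_{\vU_n}$, and then simply invoke Proposition~\ref{PP2}.

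First, I would rewrite the averaged probability measures in a directly usable form. For any Borel set $B \subset R^M$,
\[
\frac{1}{|Q|}\int_Q \delta_{\vU_n(y)}(B)\, \dy \;=\; \frac{|\{y \in Q : \vU_n(y) \in B\}|}{|Q|},
\]
so the averaged measure is essentially the law of $\vU_n$ viewed as a random variable on $(Q, \dy/|Q|)$. Next, I would verify uniform tightness by Chebyshev's inequality: for every $R>0$,
\[
\frac{1}{|Q|}\int_Q \delta_{\vU_n(y)}\bigl(\{|\vU|>R\}\bigr)\, \dy
= \frac{|\{y : |\vU_n(y)|>R\}|}{|Q|}
\leq \frac{1}{R\,|Q|}\int_Q |\vU_n(y)|\, \dy
\leq \frac{\overline{\vU}}{R\,|Q|}.
\]
Given $\ep>0$, choose $R=R(\ep)$ so large that $\overline{\vU}/(R|Q|)<\ep$, and set $K := \{\vU \in R^M : |\vU|\leq R\} \cap S$. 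If $S$ is closed in $R^M$ then $K$ is compact; if not, one can replace $K$ by its closure in $R^M$ (the probability measures live on $R^M$ via the identification in Section~\ref{P}) and the same bound persists. This yields
\[
\frac{1}{|Q|}\int_Q \delta_{\vU_n(y)}(K)\, \dy \;\geq\; 1-\ep \quad \text{uniformly in } n,
\]
which is exactly the tightness condition \eqref{P1}.

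With tightness in hand, Proposition~\ref{PP2} supplies a subsequence $n_k \to \infty$ and a Young measure $\mathcal{V}_Q$ on $Q$ with $\delta_{\vU_{n_k}} \toKY \mathcal{V}_Q$, which by Definition~\ref{D-weak_K} is precisely the weak$-\K$ property.

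The routine part is the Chebyshev estimate; the only mild subtlety, and the place where one has to be careful rather than hit a genuine obstacle, is that Young measures are formally defined as probability measures on $R^M$ supported in $S$, so the compact set producing tightness should be taken in $R^M$ (not merely in $S$). Once that is kept in mind, the argument is a one-line application of Proposition~\ref{PP2}.
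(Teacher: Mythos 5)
Your argument is correct and is exactly the route the paper takes: the paper presents this Corollary as an ``immediate consequence of Proposition~\ref{PP2}'', leaving the tightness verification implicit, and your Chebyshev estimate is precisely the missing detail that makes the application of Proposition~\ref{PP2} legitimate. No discrepancy with the paper's (unwritten) proof.
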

 Here and hereafter $\Ov{\vU}$ denotes a positive constant.

\begin{Definition}[weak$-\K$ convergence]
\label{D-weak_K_conv}
We say that a sequence of functions $\{\vU_n\}_{n=1}^\infty,$ $\vU_n\in L^1(Q;S),$ \emph{weak$-\K$ converges} to $\vU \in L^1(Q;S)$ if the following holds:
\begin{align*}
\delta_{\vU_{n_k}} \toKY \mathcal V_Q \quad \mbox{for a subsequence } n_k \to \infty\quad  \Longrightarrow \quad \vU(y)=\langle\mathcal V_y,\tilde{\vU}\rangle \quad \mbox{for a.a. } y \in Q .
\end{align*}
\end{Definition}

We can rephrase Proposition~\ref{PP2} as follows.
\begin{Proposition}
Any sequence $\{\vU_n\}_{n=1}^\infty$ satisfying
\begin{align*}
\|\vU_n\|_{L^1(Q;S)} \leq \overline{\vU} \quad \mbox{uniformly for } n=1,\ldots
\end{align*}
 admits a subsequence for $n_k \rightarrow\infty$ such that
 \begin{align*}
 \vU_{n_k} \toWK \vU, \qquad \vU \in L^1(Q;S).
 \end{align*}
 In addition,
 \begin{align*}
 \frac 1 N \sum_{k=1}^N \vU_{n_k} \to \vU \quad \mbox{a.e. in } Q.
 \end{align*}
\end{Proposition}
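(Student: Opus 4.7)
The plan is to recast the functions $\vU_n$ as Young measures $\delta_{\vU_n}$ and invoke Proposition~\ref{PP2}. First I would verify the uniform tightness condition \eqref{P1}: by Chebyshev's inequality combined with the $L^1$ bound,
\[
\frac{1}{|Q|}\int_Q \delta_{\vU_n(y)}\bigl(\{|\xi|>R\}\bigr)\,\dy = \frac{\bigl|\{y\in Q:|\vU_n(y)|>R\}\bigr|}{|Q|} \leq \frac{\Ov{\vU}}{R\,|Q|},
\]
which vanishes as $R\to\infty$ uniformly in $n$. Proposition~\ref{PP2} then yields a subsequence $n_k\to\infty$ and a Young measure $\mathcal{V}_Q$ with $\delta_{\vU_{n_k}} \toKY \mathcal{V}_Q$.

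Next, I would define the candidate limit $\vU(y):=\langle \mathcal{V}_y;\tilde{\vU}\rangle$ and check $\vU\in L^1(Q;S)$. By K-convergence together with the lower semicontinuity of $\mu\mapsto\int |\xi|\,\D\mu$ under narrow convergence, for a.a.\ $y$,
\[
\langle \mathcal{V}_y;|\tilde{\vU}|\rangle \leq \liminf_{N\to\infty}\frac{1}{N}\sum_{k=1}^N |\vU_{n_k}(y)|,
\]
so integrating over $Q$ and applying Fatou's lemma gives $\int_Q \langle \mathcal{V}_y;|\tilde{\vU}|\rangle\,\dy \leq \Ov{\vU}$. Hence $\vU\in L^1(Q;S)$, and $\vU_{n_k}\toWK\vU$ in the sense of Definition~\ref{D-weak_K_conv}.

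For the pointwise convergence of Cesaro means, the strategy is a truncation argument. Choose $g_R\in C_c(R^M;R^M)$ with $g_R(\xi)=\xi$ for $|\xi|\leq R$ and $|g_R(\xi)|\leq|\xi|$. Since the components of $g_R$ are bounded and continuous, K-convergence of the Young measures yields
\[
\frac{1}{N}\sum_{k=1}^N g_R(\vU_{n_k}(y)) \longrightarrow \langle \mathcal{V}_y;g_R\rangle \qquad \text{for a.a.\ }y\in Q,
\]
and $\langle\mathcal{V}_y;g_R\rangle\to\vU(y)$ as $R\to\infty$ by dominated convergence on $\mathcal{V}_y$, using $\langle \mathcal{V}_y;|\tilde{\vU}|\rangle<\infty$ a.e. The residual $\frac{1}{N}\sum_{k=1}^N \bigl(\vU_{n_k}(y)-g_R(\vU_{n_k}(y))\bigr)$ is dominated by $\frac{1}{N}\sum_{k=1}^N |\vU_{n_k}(y)|\,\mathbf{1}_{\{|\vU_{n_k}(y)|>R\}}$. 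Applying Proposition~\ref{PP2} a second time to the scalar $L^1$-bounded sequence $\{|\vU_n|\}$ and passing to a diagonal subsequence, I can assume $\frac{1}{N}\sum|\vU_{n_k}(y)|$ converges a.e.\ to $\langle\mathcal{V}_y;|\tilde{\vU}|\rangle$; K-convergence applied to the truncated tails $|\xi|\,\mathbf{1}_{\{|\xi|>R\}}$ (approximated by $C_c$ functions) then forces the residual to vanish as $R\to\infty$ for a.a.\ $y$.

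The main obstacle is precisely this last identification step: narrow convergence of probability measures does not preserve unbounded moments, so one cannot simply test the K-limit against the identity $\tilde{\vU}$. The additional K-convergence extracted for $|\vU_n|$ supplies the uniform-in-$N$ tail control that bridges the gap between narrow convergence against $C_c$ test functions and the convergence of barycenters claimed by the proposition. This is the place where the full strength of Proposition~\ref{PP2} (K-convergence rather than mere narrow convergence) is genuinely used.
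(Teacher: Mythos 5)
The tightness computation via Chebyshev, the extraction of a $\K-$convergent subsequence from Proposition~\ref{PP2}, and the verification that the barycenter $\vU(y)=\langle\mathcal{V}_y;\tvU\rangle$ lies in $L^1(Q;S)$ are all sound, as is the reduction of the Ces\`aro convergence to a tail estimate by truncation. The gap sits exactly where you locate ``the main obstacle'': the claim that applying Proposition~\ref{PP2} a second time to $\{|\vU_n|\}$ lets you assume $\frac{1}{N}\sum_{k=1}^N|\vU_{n_k}(y)|\to\langle\mathcal{V}_y;|\tvU|\rangle$ a.e. Proposition~\ref{PP2} delivers only $\K-$convergence, i.e.\ narrow convergence of the Ces\`aro-averaged Dirac measures, and narrow convergence is tested against $C_c(R^M)$ (Definition~\ref{PD1}). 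The identity $s\mapsto s$ is not such a test function, so $\K-$convergence of $\delta_{|\vU_{n_k}|}$ gives no information about $\frac{1}{N}\sum_k|\vU_{n_k}(y)|$ itself and supplies no tail control whatsoever. Worse, the assertion you need --- that the a.e.\ limit of the Ces\`aro means of an $L^1$-bounded scalar sequence equals the first moment of its $\K-$limit Young measure --- is precisely an instance of the proposition being proved, applied to $\{|\vU_n|\}$; the argument is circular at the decisive point. Concretely, your residual $\frac{1}{N}\sum_k|\vU_{n_k}(y)|\mathbf{1}_{\{|\vU_{n_k}(y)|>R\}}$ tends (if it converges at all) to $W(y)-\langle\mathcal{W}_y;s\mathbf{1}_{\{s\le R\}}\rangle$, where $\mathcal{W}$ is the Young measure generated by $\{|\vU_{n_k}|\}$ and $W$ is the unidentified limit of the full Ces\`aro means, and nothing you have written excludes $W(y)>\langle\mathcal{W}_y;s\rangle$ on a set of positive measure, i.e.\ concentration surviving the averaging.

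Two ingredients are needed to close this. First, the very existence of the a.e.\ limit of $\frac{1}{N}\sum_k\vU_{n_k}$ for a merely $L^1$-bounded (not uniformly integrable) sequence is Koml\'os' theorem \cite{Kom}, not a consequence of Proposition~\ref{PP2} as stated. Second, identifying that Koml\'os limit with the barycenter requires an additional argument; the standard one passes through the biting lemma: off a decreasing family of sets of vanishing measure the subsequence is uniformly integrable, convex combinations inherit uniform integrability, and Vitali's theorem then identifies the Koml\'os limit with the weak-$L^1$, hence barycentric, limit on each such set. Alternatively one invokes Balder's theorem in its full strength, which packages both facts. The paper does the latter: it gives no proof of this proposition at all, presenting it as a mere restatement of Proposition~\ref{PP2} and thereby deferring the substantive content to Koml\'os and Balder. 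Your truncation scheme correctly isolates the tail estimate as the crux, but the tail estimate \emph{is} the theorem.
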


\begin{Proposition}
For any sequence $\{\vU_n\}_{n=1}^\infty,$
\begin{align*}
\vU_n \rightharpoonup \vU \quad \mbox{weakly in } L^1(Q;S)\quad \Longrightarrow  \quad \vU_{n} \toWK \vU.
\end{align*}

\end{Proposition}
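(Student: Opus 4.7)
Fix an arbitrary subsequence $\{\vU_{n_k}\}$ for which $\delta_{\vU_{n_k}} \toKY \mathcal{V}_Q$; the plan is to verify $\vU(y) = \langle \mathcal{V}_y, \tilde{\vU}\rangle$ for a.a.\ $y \in Q$, which is exactly the defining property of weak$-\K$ convergence. By Proposition~\ref{PP1}, the $\K$-convergence automatically upgrades to narrow convergence $\delta_{\vU_{n_k}} \toNY \mathcal{V}_Q$, so that for every $g \in L^1(Q; C_0(R^M))$,
\[
\int_Q g(y, \vU_{n_k}(y))\,\dy \to \int_Q \langle \mathcal{V}_y, g(y, \cdot)\rangle\,\dy.
\]
The weak $L^1$ hypothesis supplies the two ingredients I intend to exploit: boundedness $\sup_n \|\vU_n\|_{L^1(Q; R^M)} < \infty$ and, via the Dunford--Pettis theorem, equi-integrability of the family $\{\vU_n\}$ in $L^1(Q;R^M)$.

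The principal obstacle is that the ``barycentric'' test function $\tilde{\vU} \mapsto \tilde{\vU}$ is \emph{not} of compact support, so narrow convergence does not directly produce the first moment $\langle \mathcal{V}_y, \tilde{\vU}\rangle$. I would handle this by a standard truncation argument: pick a cutoff $T_R \in C_c(R^M; R^M)$ with $T_R(z) = z$ for $|z| \leq R$ and $|T_R(z)| \leq |z|$ everywhere, and for each fixed $\bfpsi \in L^\infty(Q; R^M)$ test narrow convergence against the admissible $g(y, z) = \bfpsi(y) \cdot T_R(z) \in L^1(Q; C_0(R^M))$ to obtain
\[
\lim_{k \to \infty} \int_Q \bfpsi(y) \cdot T_R(\vU_{n_k}(y))\,\dy = \int_Q \bfpsi(y) \cdot \langle \mathcal{V}_y, T_R(\tilde{\vU})\rangle\,\dy.
\]

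Equi-integrability controls the truncation error uniformly: $\sup_k \int_Q |\vU_{n_k} - T_R(\vU_{n_k})|\,\dy \to 0$ as $R \to \infty$, while the weak $L^1$ convergence gives $\int_Q \bfpsi \cdot \vU_{n_k}\,\dy \to \int_Q \bfpsi \cdot \vU\,\dy$. Sending $k \to \infty$ first and then $R \to \infty$ in the display above yields
\[
\int_Q \bfpsi \cdot \vU\,\dy = \lim_{R \to \infty}\int_Q \bfpsi(y) \cdot \langle \mathcal{V}_y, T_R(\tilde{\vU})\rangle\,\dy.
\]
A Fatou-type argument (test narrow convergence against $|T_R(z)|$, then use the uniform $L^1$-bound on $\vU_n$ and let $R \to \infty$) shows that $y \mapsto \langle \mathcal{V}_y, |\tilde{\vU}|\rangle$ lies in $L^1(Q)$, so dominated convergence identifies the right-hand side with $\int_Q \bfpsi \cdot \langle \mathcal{V}_y, \tilde{\vU}\rangle\,\dy$. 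Since $\bfpsi \in L^\infty(Q; R^M)$ is arbitrary, this forces $\vU(y) = \langle \mathcal{V}_y, \tilde{\vU}\rangle$ for a.e.\ $y \in Q$, establishing weak$-\K$ convergence along the chosen subsequence and hence, by the arbitrariness of the subsequence, overall.
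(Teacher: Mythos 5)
Your proof is correct and follows essentially the same route as the paper: both pass from $\K$-convergence to narrow convergence via Proposition~\ref{PP1} and then use the equi-integrability supplied by weak $L^1$ convergence (Dunford--Pettis in your write-up, the equivalent De la Vall\'ee--Poussin criterion in the paper) to extend the admissible test functions from $C_c(R^M)$ to the coordinate functions $\tilde{\vU}\mapsto U^i$. Your truncation-plus-dominated-convergence argument is simply the explicit justification of the step the paper compresses into ``we are allowed to take $g(\vU)=U^i$,'' so it is a welcome elaboration rather than a different method.
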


\begin{proof}
Suppose that
\begin{align}\label{weakL1}
\vU_n \rightharpoonup \vU \quad \mbox{weakly in } L^1(Q;S)
\end{align}
and $$\delta_{\vU_{n_k}} \toKY \mathcal V_Q \quad \mbox{for a subsequence } n_k \to \infty.$$
We have to show that $\langle \mathcal V_y,\tvU \rangle=\vU(y)$ for a.a. $y \in Q.$ By virtue of Proposition~\ref{PP1} we have $$\delta_{\vU_{n_k}} \toNY \mathcal V_Q.$$
Specifically,
\begin{align}\label{aux_narrow}
\int_Q h(y)g(\vU_{n_k}(y)) \ \dy \longrightarrow \int_Q h(y)\langle \mathcal V_y, g(\tvU)\rangle \ \dy,
\end{align}
for any  $h \in L^\infty(Q)$ and any $g \in C_c(R^M).$ Moreover, in accordance with the De la Vall\' e--Poussin criterion, see Pedregal \cite[Lemma 6.4]{Ped}, and \eqref{weakL1} we have
\begin{align*}
\int_Q \phi(|\vU_n|) \ \dy \leq \overline{\vU}
\end{align*}
for a superlinear function $\phi.$ Consequently, we are allowed to take $g(\vU)=U^i,$ $i=1,2,\ldots,M,$ in \eqref{aux_narrow} which yields  $\langle \mathcal V_y,\tvU \rangle=\vU(y)$ for a.a. $y \in Q.$
\end{proof}

\begin{Definition}[strong$-\K$ convergence]
\label{D-strong_K_conv}
We say that a sequence of functions $\{\vU_n\}_{n=1}^\infty,$ $\vU_n \in L^1(Q;S),$ \emph{strong$-\K$ converges} to $\vU \in L^1(Q;S)$ if the following holds:
\begin{align*}
\delta_{\vU_{n_k}} \toKY \mathcal V_Q \quad \mbox{for a subsequence } n_k \to \infty \quad \Longrightarrow \quad
\mathcal V_y= \delta_{\vU(y)}\quad \mbox{for a.a. } y \in Q.
\end{align*}

\end{Definition}

\begin{Proposition} \label{PPP}
For any sequence $\{\vU_n\}_{n=1}^\infty,$
\begin{align*}
\vU_n \to \vU \quad \mbox{strongly in } L^1(Q;S)\quad \Longrightarrow  \quad \vU_{n} \toSK \vU.
\end{align*}

\end{Proposition}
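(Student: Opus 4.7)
The plan is to combine the standard fact that strong $L^1$ convergence implies pointwise a.e.\ convergence along a subsequence with the observation that pointwise convergence of points in $S$ yields narrow convergence of the associated Dirac masses, and finally that the Cesàro mean of a narrowly convergent sequence of probability measures has the same narrow limit.

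First, I would unwind the definitions. Assume $\vU_n \to \vU$ strongly in $L^1(Q;S)$, and suppose $\delta_{\vU_{n_k}} \toKY \mathcal{V}_Q$ for some subsequence. I have to show $\mathcal{V}_y = \delta_{\vU(y)}$ for a.a.\ $y \in Q$. The key point is that Definition~\ref{PD4} demands Cesàro convergence along every further subsequence. So I fix an arbitrary further subsequence, which I again relabel, and reduce to showing narrow convergence of its Cesàro averages to $\delta_{\vU(y)}$.

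Next, I exploit strong convergence. The subsequence $\vU_{n_k}$ still converges to $\vU$ in $L^1(Q;S)$, so by a standard extraction I can pass to yet another subsequence $\vU_{n_{k_j}}$ such that $\vU_{n_{k_j}}(y) \to \vU(y)$ for a.a.\ $y \in Q$. For such a $y$ and any $g \in C_c(R^M)$, continuity gives $g(\vU_{n_{k_j}}(y)) \to g(\vU(y))$, i.e.\ $\delta_{\vU_{n_{k_j}}(y)} \toN \delta_{\vU(y)}$ in $\mathcal{P}(S)$. Since the Cesàro averages of a convergent sequence of real numbers have the same limit, this gives
\[
\frac{1}{N}\sum_{j=1}^N \left< \delta_{\vU_{n_{k_j}}(y)}; g \right> \longrightarrow \left< \delta_{\vU(y)}; g \right>
\]
for every $g \in C_c(R^M)$ and a.a.\ $y \in Q$. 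Hence $\frac{1}{N}\sum_{j=1}^N \delta_{\vU_{n_{k_j}}(y)} \toN \delta_{\vU(y)}$ for a.a.\ $y \in Q$.

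On the other hand, the hypothesis $\delta_{\vU_{n_k}} \toKY \mathcal{V}_Q$, applied to the further subsequence $n_{k_j}$, yields
\[
\frac{1}{N}\sum_{j=1}^N \delta_{\vU_{n_{k_j}}(y)} \toN \mathcal{V}_y \quad \mbox{for a.a.}\ y \in Q.
\]
Uniqueness of the narrow limit in $\mathcal{P}(S)$ then forces $\mathcal{V}_y = \delta_{\vU(y)}$ for a.a.\ $y \in Q$, which is exactly the strong$-\K$ convergence property. I do not anticipate any serious obstacle; the only mildly delicate bookkeeping is keeping the nested subsequences straight and invoking the Cesàro lemma at the level of each test function $g$ pointwise in $y$, before concluding narrow convergence.
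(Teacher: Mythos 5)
Your proof is correct. It arrives at the same identity $\left< \mathcal{V}_y ; g \right> = g(\vU(y))$ as the paper, but by a genuinely different route. You exploit the fact that Definition~\ref{PD4} is stable under passage to further subsequences: you extract from $\{\vU_{n_k}\}$ an a.e.\ convergent sub-subsequence $\{\vU_{n_{k_j}}\}$, apply the elementary Ces\`aro lemma pointwise in $y$ to the numerical sequence $g(\vU_{n_{k_j}}(y))$, and then invoke uniqueness of narrow limits. The paper instead works with the full subsequence $\{n_k\}$ and shows that the Ces\`aro means $\frac{1}{N}\sum_{k=1}^N g(\vU_{n_k})$ converge to $g(\vU)$ in $L^1(Q)$, via the Lipschitz bound $\|g(\vU_{n_k}) - g(\vU)\|_{L^1(Q)} \leq c_g \|\vU_{n_k}-\vU\|_{L^1(Q)}$ for $g \in C_c(R^M)\cap C^1(R^M)$, and then identifies this $L^1$ limit with the a.e.\ limit $\left<\mathcal{V}_y; g\right>$ supplied by the $\K$-convergence hypothesis. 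Your argument buys two small simplifications: it needs only continuity of $g$ (no restriction to $C^1$ test functions, hence no density step), and the identification of the two limits is immediate pointwise rather than through the implicit ``$L^1$ limit and a.e.\ limit coincide'' step. The paper's version avoids nested subsequences. One cosmetic remark: fixing ``an arbitrary further subsequence'' at the outset is redundant --- since the measure $\mathcal{V}_y$ you are identifying does not depend on the choice of subsequence, it suffices to produce a single a.e.\ convergent subsequence of $\{n_k\}$ and apply the hypothesis to that one.
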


\begin{proof}
Suppose that
\begin{align}\label{strongL1}
\vU_n \to \vU \quad \mbox{strongly in } L^1(Q;S)
\end{align}
and $$\delta_{\vU_{n_k}} \toKY \mathcal V_Q \quad \mbox{for a subsequence } n_k \to \infty$$
which means that
\begin{align*}
\frac 1 N \sum_{k=1}^N g(\vU_{n_k}(y)) \to \mathcal \langle \mathcal V_y, g \rangle \quad \mbox{a.a. } y \in Q.
\end{align*}
We have to show  $\mathcal V_y= \delta_{\vU(y)},$  meaning  $g(\vU(y))=\langle \mathcal V_y, g \rangle $ for all $g \in C_c(R^M)$ and a.a. $y \in Q.$
To this end it is enough to observe that for any $g \in C_c(R^M)\cap C^1(R^M),$
\begin{align*}
\frac 1 N \sum_{k=1}^N g(\vU_{n_k}) \to g(\vU) \quad \mbox{in } L^1(Q).
\end{align*}
Indeed,  in accordance with \eqref{strongL1},
\begin{align*}
\left|\left| \frac 1 N \sum_{k=1}^N g(\vU_{n_k})- g(\vU)\right|\right|_{L^1(Q)} &\leq \frac 1 N \sum_{k=1}^N \|g(\vU_{n_k}) - g(\vU)\|_{L^1(Q)} \\
&\leq c_g \frac 1  N \sum_{k=1}^N \|\vU_{n_k}-\vU\|_{L^1(Q)} \to 0 \quad \mbox{ as }\  N \to \infty.
\end{align*}
\end{proof}

\begin{Lemma} \label{SL1}
Let $\{\vU_n\}_{n=1}^\infty$ be a sequence of functions in $L^1(Q; S)$ such that
\begin{equation} \label{S12}
\int_Q |\vU_n |\ \dy \leq \Ov{\vU} \ \mbox{for any}\ n=1,\ldots.
\end{equation}
Then the following is equivalent:
\begin{enumerate}
\item
\[
\vU_{n} \to \vU \ \mbox{in measure in}\ Q;
\]

\item
\[
\delta_{\vU_{n}}  \toNY \delta_{\vU } .
\]

\end{enumerate}

In both cases, there is a subsequence  $n_k \to \infty$ such that
\begin{itemize}
\item
\[
\vU_{n_k}(y) \to \vU(y) \ \mbox{as}\ k \to \infty
\]
for a.a. $y \in Q$;
\item
\[
\frac{1}{N} \sum_{k=1}^N g (\vU_{n_k}(y)) \to g (\vU(y)) \ \mbox{as}\ N \to \infty
\]
for any $g \in C_c(R^M)$ and a.a. $y \in Q$.
\end{itemize}

\end{Lemma}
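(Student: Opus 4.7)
The plan is to establish the equivalence (1)$\Leftrightarrow$(2) directly, and then to deduce the subsequence assertions by exploiting the standard fact that convergence in measure on a set of finite measure yields an a.e.\ convergent subsequence. Both implications rest on the right choice of test functions in $L^1(Q; C_0(R^M))$.

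For (1)$\Rightarrow$(2), fix $g \in L^1(Q; C_0(R^M))$. The uniform domination
\[
|g(y, \vU_n(y))| \leq \|g(y, \cdot)\|_{C_0(R^M)} \in L^1(Q)
\]
holds regardless of $n$. If $\vU_n \to \vU$ in measure, then every subsequence admits a further subsequence converging a.e.\ in $Q$; by continuity of $g(y, \cdot)$ and Lebesgue dominated convergence, the associated integrals converge to $\int_Q g(y, \vU(y)) \dy$. The standard sub-subsequence principle then forces $\int_Q g(y, \vU_n(y)) \dy \to \int_Q g(y, \vU(y)) \dy$, which is precisely the definition of $\delta_{\vU_n} \toNY \delta_{\vU}$.

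For (2)$\Rightarrow$(1), fix $\ep > 0$ and $R > 0$. Choose a cut-off $\eta_R \in C_c(R^M)$ with $\eta_R \equiv 1$ on the ball $B_R$, $0 \leq \eta_R \leq 1$, and a continuous $h_\ep : [0,\infty) \to [0,1]$ satisfying $h_\ep(0) = 0$ and $h_\ep(t) = 1$ for $t \geq \ep$. Define
\[
g_R(y, v) := \eta_R(v)\, h_\ep(|v - \vU(y)|).
\]
Then $g_R(y, \cdot) \in C_c(R^M)$ with $\|g_R(y, \cdot)\|_{C_0(R^M)} \leq 1$, so since $Q$ is bounded we have $g_R \in L^1(Q; C_0(R^M))$. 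Because $h_\ep(0) = 0$, the limit integral $\int_Q g_R(y, \vU(y)) \dy$ vanishes, and narrow convergence yields $\int_Q g_R(y, \vU_n(y)) \dy \to 0$. The pointwise bound
\[
g_R(y, \vU_n(y)) \geq \mathbf{1}_{\{|\vU_n(y)| \leq R\} \cap \{|\vU_n(y) - \vU(y)| \geq \ep\}}
\]
then gives $|\{|\vU_n| \leq R,\ |\vU_n - \vU| \geq \ep\}| \to 0$ as $n \to \infty$. Combining this with the Chebyshev bound $|\{|\vU_n| > R\}| \leq \Ov{\vU}/R$ deduced from \eqref{S12}, and letting first $n \to \infty$ and then $R \to \infty$, one obtains $\vU_n \to \vU$ in measure. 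Finally, convergence in measure on the finite-measure set $Q$ admits a subsequence $\{\vU_{n_k}\}$ with $\vU_{n_k} \to \vU$ a.e.; continuity of any $g \in C_c(R^M)$ transfers this to $g(\vU_{n_k}(y)) \to g(\vU(y))$ a.e., and since Cesàro means of a convergent numerical sequence share its limit, the final assertion on arithmetic averages follows.

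The main obstacle is the construction of the test function $g_R$ in (2)$\Rightarrow$(1): it must simultaneously lie in $L^1(Q; C_0(R^M))$, which forces compact support in $v$ through $\eta_R$, and detect the event $\{|\vU_n - \vU| \geq \ep\}$ through $h_\ep$. The $L^1$ bound \eqref{S12} is indispensable to dispose of the tail set $\{|\vU_n| > R\}$, where $g_R$ gives no information by construction.
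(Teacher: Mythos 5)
Your proof is correct, but it takes a different route from the paper's in both directions of the equivalence. For (1)$\Rightarrow$(2) the paper fixes $g\in C_c\cap C^1$ and splits the integral over $\{|\vU_n-\vU|\le k\}$ and its complement, using the Lipschitz bound on $g$ on the first set and smallness of measure on the second; you instead invoke the sub-subsequence principle together with dominated convergence, which is shorter and avoids the density step in $C^1$. For (2)$\Rightarrow$(1) the difference is more substantive: the paper stays with product test functions $h(y)g(\vU)$, applies narrow convergence to $|g|^2$ as well as $g$ to upgrade weak to strong $L^2$ convergence of $g(\vU_n)$, and then deduces convergence in measure; you exploit the fact that Definition~\ref{PD3} admits $y$-dependent test functions and build $g_R(y,v)=\eta_R(v)\,h_\ep(|v-\vU(y)|)$, which directly measures the bad set $\{|\vU_n|\le R,\ |\vU_n-\vU|\ge\ep\}$ --- a more direct argument, at the cost of having to check that $y\mapsto g_R(y,\cdot)$ is Bochner measurable into $C_0(R^M)$ (it is, since $u\mapsto h_\ep(|\cdot-u|)$ is continuous in the sup norm, but you should say so). Both arguments use \eqref{S12} identically, via Chebyshev, to dispose of the tail $\{|\vU_n|>R\}$. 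For the final assertions the paper appeals to Proposition~\ref{PP2} to produce the $\K$-convergent subsequence, whereas you observe that once a subsequence converges a.e., continuity of $g$ and the elementary fact that Ces\`aro means inherit the limit already give the conclusion; your version is more economical and equally valid.
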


\begin{proof}

{\bf Step 1:}

Suppose that ${\vU_n} \to \vU$ in measure. To show
\[
\delta_{\vU_n}  \toNY  \delta_\vU
\]
it is enough to observe that
\[
\int_Q h(y) g (\vU_n(y))\ \dy  \to \int_Q h(y) g(\vU(y)) \ \dy \ \mbox{for any}\
h \in L^\infty(Q) \ \mbox{and any} \ g \in  C_c(R^M) \cap C^1(R^M).
\]
To see this, for given $\ep > 0$ and $ k> 0$ we write the integral
\[
\begin{split}
\int_Q &h (y) \left[ g (\vU_n(y) ) - g (\vU(y) ) \right] \dy \\
&= \int_{\{ |\vU_n - \vU | \leq k \}}
h(y) \left[ g (\vU_n(y) ) - g (\vU(y) ) \right]  \dy + \int_{\{ |\vU_n - \vU | > k \} }
h (y) \left[ g (\vU_n(y) ) - g (\vU(y) ) \right]  \dy,
\end{split}
\]
where
\[
\left| \int_{\{ |\vU_n - \vU | \leq k \}}
h (y) \left[ g (\vU_n(y) ) - g (\vU(y) ) \right]  \dy \right| \leq k c_1(h,g),
\]
while
\[
\left| \int_{\{ |\vU_n - \vU | > k \} }
h (y) \left[ g (\vU_n(y) ) - g (\vU(y) ) \right]  \dy \right| < \ep c_2(h,g)
\]
if $n$ is large enough. Consequently, choosing $\ep, k$ small and $n$ large we get the desired result.

\medskip

{\bf Step 2:} Assume now that
\[
\int_Q h(y) g (\vU_n(y) )\ \dy  \to \int_Q h (y) g(\vU(y)) \ \dy \ \mbox{for any}\
h \in L^1(Q) \ \mbox{and any} \ g \in C_c (R^M).
\]
This implies, in particular
\[
\int_Q h(y) |g (\vU_n(y) )|^2 \dy  \to \int_Q h (y) |g(\vU(y))|^2 \ \dy \ \mbox{for any}\
h \in L^1(Q) \ \mbox{and any} \ g \in C_c(R^M).
\]
This gives rise to
\[
g(\vU_n) \to g(\vU) \ \mbox{in}\ L^2(Q) \ \mbox{for any}\ g \in C_c(R^M),
\]
yielding
\[
g(\vU_n) \to g(\vU) \ \mbox{in measure in}\ Q
\]
for any $g \in C_c(R^M)$. This, together with hypothesis \eqref{S12}, implies
\[
\vU_n \to \vU \ \mbox{in measure in}\ Q.
\]

{\bf Step 3:}

As the sequence converging in measure contains a subsequence converging pointwise, and according to Proposition~\ref{PP2},
the associated sequence of Young measures contains a $\K-$converging subsequence, the remaining part of the conclusion of
Lemma \ref{SL1} follows.

\end{proof}

\section{Isentropic Euler system}
\label{E}

We consider the isentropic Euler system describing the time evolution of the density $\vr = \vr(t,x)$ and the momentum
$\vm = \vm(t,x)$ of a compressible inviscid fluid:
\begin{equation} \label{E1}
\begin{split}
\partial_t \vr + \Div \vm &= 0 \\
\partial_t \vm + \Div \left( \frac{\vm \otimes \vm}{\vr} \right) + \Grad p(\vr) &= 0,\ p(\vr) = a \vr^\gamma,\ a > 0,
\ \gamma > 1.
\end{split}
\end{equation}
For the sake of simplicity, we consider the space periodic boundary conditions, meaning the spatial domain can be identified with the
flat torus
\begin{equation} \label{E2}
\Q = \left( [-1,1]|_{\{ -1,1 \}} \right)^d, \ d=1,2,3.
\end{equation}
The problem is supplemented by the initial conditions
\begin{equation} \label{E3}
\vr(0, \cdot) = \vr_0, \ \vm(0, \cdot) = \vm_0.
\end{equation}

We consider solutions defined on the time interval $(0,T)$. Accordingly, the relevant physical space is
\[
Q = (0,T) \times \Q \subset R^{d+1}.
\]
As the mass density is {\it a priori} a non--negative quantity, we set the phase space $S$ to be
\[
S = \left\{ [\vr, \vm] \ \Big| \ \vr \in [0, \infty), \ \vm \in R^d \right\}.
\]
Thus $K = M = d+1$, $d=1,2,3$.

We also introduce the \emph{energy inequality} in the form
\begin{equation} \label{E4}
\frac{ \D }{\D t} \intO{  \left[\frac{1}{2} \frac{ |\vm|^2 }{\vr} + P(\vr)   \right]} \leq 0,\
\intO{  \left[\frac{1}{2} \frac{ |\vm|^2 }{\vr} + P(\vr)   \right] (0+, \cdot) } \leq
\intO{ \left[\frac{1}{2} \frac{ |\vm_0|^2 }{\vr_0} + P(\vr_0)   \right] },
\end{equation}
where
\[
P(\vr) = \frac{a}{\gamma - 1} \vr^\gamma
\]
is the \emph{pressure potential}.

As already mentioned in Section \ref{YN}, the Euler system \eqref{E1}--\eqref{E3} is essentially ill--posed on $(0,T)$. Specifically, the unique strong solutions exist only on a possible short time interval $[0, T_{\rm max})$, while the problem admits infinitely many
weak solutions for general initial data. In addition, there are infinitely many weak solutions
for certain initial data even if the energy inequality \eqref{E4} is imposed. On the other hand, the existence of global in time
admissible weak solutions, meaning weak solutions satisfying the energy inequality \eqref{E4}, for \emph{general} initial data is an open problem. To overcome this difficulty, we introduce a class of generalized \emph{dissipative} solutions that exist globally in time for all finite energy initial data.

\subsection{Weak solutions}

We start with the weak formulation of \eqref{E1}--\eqref{E3} that reads:
\begin{equation} \label{E5}
\left[ \intO{ \vr \varphi } \right]_{t = 0}^{t = \tau} =
\int_0^\tau \intO{ \Big[ \vr \partial_t \varphi + \vm \cdot \Grad \varphi \Big] } \dt,\
\vr(0, \cdot) = \vr_0,
\end{equation}
for any $0 < \tau < T$, $\varphi \in C^1([0,T] \times \Q)$;
\begin{equation} \label{E6}
\left[ \intO{ \vm \cdot \bfphi } \right]_{t = 0}^{t = \tau} =
\int_0^\tau \intO{ \Big[ \vm \cdot \partial_t \bfphi + \frac{\vm \otimes \vm}{\vr} : \Grad \bfphi
+ p(\vr) \Div \bfphi \Big] } \dt, \ \vm(0,\cdot) = \vm_0,
\end{equation}
for any $0 < \tau < T$, $\bfphi \in C^1([0,T] \times \Q; R^d)$.

In addition, the total energy is a non--decreasing function of time,
\begin{equation} \label{E7}
\begin{split}
\intO{ \left[ \frac{1}{2} \frac{ |\vm|^2 }{\vr} + P(\vr) \right] (\tau+, \cdot) }
&\leq \intO{ \left[ \frac{1}{2} \frac{ |\vm|^2 }{\vr} + P(\vr) \right] (s-, \cdot) }
\ \mbox{for any}\ \tau \geq s \geq 0,\\ \mbox{where we set}\
\intO{ \left[ \frac{1}{2} \frac{ |\vm|^2 }{\vr} + P(\vr) \right] (0-, \cdot) }
&\equiv \intO{ \left[ \frac{1}{2} \frac{ |\vm_0|^2 }{\vr_0} + P(\vr_0) \right] }.
\end{split}
\end{equation}

\subsection{Dissipative measure--valued solutions}

Following \cite{BreFeiHof19} we introduce a class of generalized solutions using the theory of Young measures.
The leading idea is to replace all nonlinear compositions in \eqref{E5}--\eqref{E7} by the action of a suitable Young measure
$\{ \mathcal{V}_y \}_{y \in Q}$, $\mathcal{V}_y \in \mathcal{P}(S)$. A weak solution would then correspond to
$\mathcal{V} = \delta_{[\vr, \vm]}$. Unfortunately, the only {\it a priori} bounds available result from boundedness of the
total energy uniformly in time. In view of the specific form of the isentropic EOS, this yields
\begin{equation} \label{E7a}
\frac{|\vm|^2}{\vr} \in L^\infty(0,T; L^1(\Q)),\ \vr \in L^\infty(0,T; L^\gamma(\Q)),\
\vm \in L^\infty(0,T; L^{\frac{2 \gamma}{\gamma + 1}}(\Q; R^d)).
\end{equation}
 Our goal is to generate the measure--valued solutions by means of an
energy dissipative numerical scheme.
Given rather poor stability estimates that basically reflect \eqref{E7a},
the energy, the pressure as well as the convective term in the momentum equation \eqref{E6} may develop concentrations that give rise to the so--called concentration
defect measures in the limit equations. We shall therefore make an ansatz
\[
\begin{split}
\vr(t,x) &\approx \left< \mathcal{V}_{t,x}; \tvr \right>,\ \vm(t,x) \approx \left< \mathcal{V}_{t,x}; \tvm \right>, \\
 P(\vr)(t,x) &\approx \left< \mathcal{V}_{t,x}; P(\tvr) \right> +
\mathfrak{C}_{\rm int}(t,x),\ p(\vr)(t,x) \approx \left< \mathcal{V}_{t,x}; p(\tvr) \right> +
(\gamma - 1) \mathfrak{C}_{\rm int}(t,x), \\
\frac{1}{2} \frac{|\vm|^2}{\vr}(t,x) &\approx \frac{1}{2} \left< \mathcal{V}_{t,x}; \frac{|\tvm|^2}{\tvr} \right> +
\mathfrak{C}_{\rm kin}(t,x),
\end{split}
\]
where the energy \emph{concentration defect measures} belong to the class
\[
\mathfrak{C}_{\rm kin},\ \mathfrak{C}_{\rm int} \in L^\infty(0,T; \mathcal{M}^+ (\Q)).
\]

The convective term in the momentum equation is more delicate. We write
\[
\frac{\vm \otimes \vm}{\vr} = 2 \left( \frac{ \vm }{|\vm|} \otimes \frac{ \vm }{|\vm|} \right) \frac{1}{2} \frac{|\vm|^2}{\vr}
\]
seeing that the expression on the right--hand side is a rank-one symmetric matrix with the trace $\frac{|\vm|^2}{\vr}$. This motivates
the following ansatz for the convective term:
\[
\frac{\vm \otimes \vm}{\vr}(t,x) = \left< \mathcal{V}_{t,x}; \frac{\tvm \otimes \tvm}{\tvr} \right>
+ \mathfrak{C}_{\rm conv}(t,x),
\]
where
\begin{equation} \label{E8}
\begin{split}
\mathfrak{C}_{\rm conv} &\in L^\infty(0,T; \mathcal{M}(\Q; R^{d \times d}_{{\rm sym}})),\\
\int_{\Q} \mathbb{M} : \D \mathfrak{C}_{\rm conv}(t) &\geq 0 \ \mbox{for any}\ \mathbb{M} \in C(\Q; R^{d \times d}_{\rm sym}),\
\mathbb{M} \geq 0, \ \mbox{and a.a.}\ t \in (0,T), \\
\frac{1}{2} \int_{\Q} h \mathbb{I} : \D \mathfrak{C}_{\rm conv}(t) &=  \int_{\Q} h \ \D \mathfrak{C}_{\rm kin}(t)
\ \mbox{for any}\ h \in C(\Q), \ \mbox{and a.a.}\ t \in (0,T).
\end{split}
\end{equation}

\begin{Remark} \label{RRP1}

The second condition in \eqref{E8} indicates that $\mathfrak{C}(t, \cdot)$ is positively define, while the third
one can be interpreted as
\[
\frac{1}{2} {\rm trace} [\mathfrak{C}_{\rm conv}] = \mathfrak{C}_{\rm int}.
\]

\end{Remark}

\begin{Definition}[Dissipative measure--valued solution] \label{ED1}

We say that a Young measure \\ $\mathcal{V} \in L^{\infty}_{{\rm weak-(*)}}((0,T) \times \Q; \mathcal{P}(S))$, and the concentration defect measures
$\mathfrak{C}_{{\rm kin}} \in L^\infty(0,T; \mathcal{M}^+(\Q))$, $\mathfrak{C}_{{\rm int}} \in L^\infty(0,T; \mathcal{M}^+(\Q))$,
$\mathfrak{C}_{{\rm conv}} \in L^\infty(0,T; \mathcal{M}(\Q; R^{d \times d}_{\rm sym} ))$, satisfying the compatibility condition
\eqref{E8}, are \emph{dissipative measure valued solution} of the Euler system \eqref{E1}, \eqref{E2} with the initial data
\[
[\vr_0 , \ \vm_0, \ E_0], \  \vr_0 \geq 0, \ \intO{\frac 1 2 \frac{|\vm_0|^2}{\vr_0} + P(\vr_0) } \leq E_0
\]
if:

$\bullet$
\begin{equation} \label{E9}
\intO{ \left< \mathcal{V}_{t, x}; \tvr \right>}  - \intO{ \vr_0 \varphi } =
\int_0^\tau \intO{ \Big[ \left< \mathcal{V}_{t, x}; \tvr \right> \partial_t \varphi + \left< \mathcal{V}_{t, x}; \tvm \right> \cdot \Grad \varphi \Big] } \dt
\end{equation}
for any $0 < \tau < T$, $\varphi \in C^1([0,T] \times \Q)$;

$\bullet$
\begin{equation} \label{E10}
\begin{split}
\intO{ \left< \mathcal{V}_{t,x}; \tvm \right> \cdot \bfphi } &- \intO{ \vm_0 \cdot \bfphi }\\
 &=
\int_0^\tau \intO{ \Big[ \left< \mathcal{V}_{t,x}; \tvm \right> \cdot \partial_t \bfphi +
\left< \mathcal{V}_{t,x}; \frac{\tvm \otimes \tvm}{\tvr} \right> : \Grad \bfphi
+ \left< \mathcal{V}_{t,x};p(\tvr) \right> \Div \bfphi \Big] } \dt \\
&+ \int_0^\tau \int_{\Q} \Grad \bfphi : \D \mathfrak{C}_{\rm conv}(t, \cdot) \dt
+ (\gamma - 1) \int_0^\tau \int_{\Q} \Div \bfphi \ \D \mathfrak{C}_{\rm int}(t, \cdot) \dt
\end{split}
\end{equation}
for any $0 < \tau < T$, $\bfphi \in C^1([0,T] \times \Q; R^d)$;

$\bullet$
the total energy $E$ is a non--increasing function on $[0,T]$,
\begin{equation} \label{E11}
E(\tau) = \intO{ \left< \mathcal{V}_{\tau,x}; \frac{1}{2} \frac{|\tvm|^2}{\tvr} + P(\tvr) \right> } +
\int_{\Q} \Big[ \D \mathfrak{C}_{\rm kin}(\tau, \cdot) + \D \mathfrak{C}_{\rm int}(\tau, \cdot) \Big],\
E(0-) = E_0,
\end{equation}
for a.a. $\tau \in (0,T)$.

\end{Definition}

\begin{Remark} \label{Remnew}

Definition \ref{ED1} may seem rather awkward containing both the Young measure and the concentration defect measures. We show in Section
\ref{CR}, that the deviations of the Young measure from its barycenter can be included in the concentration defect. In particular,
we may always assume that $\mathcal{V}$ can be replaced by $\delta_{[\vr, \vm]}$ in \eqref{E9}--\eqref{E11}.
\end{Remark}

Dissipative measure--valued solutions enjoy an important property of weak--strong uniqueness. Here ``strong'' is meant in a generalized sense.
To state the relevant result, we introduce a class of functions $r$ and $\vu$:
\begin{equation} \label{E12}
\begin{split}
r \in C([0,T]; L^1(\Q)),\ \vu \in C([0,T] ; L^1(\Q; R^d));\\
0 < \underline r \leq r \leq \Ov{r},\ |\vu| \leq \Ov{\vu} \ \mbox{a.a. in}\ (0,T) \times \Omega;\\
r \in B^{\alpha, \infty}_p ([\delta, T] \times \Q),\
\vu \in B^{\alpha, \infty}_p ([\delta, T] \times \Q ; R^q) \ \mbox{for any}\ 0 < \delta < T, \
\alpha > \frac{1}{2},\ p \geq \frac{4 \gamma}{\gamma - 1};\\
\intO{ \left[ - \xi \cdot \vu(\tau, \cdot) (\xi \cdot \Grad) \varphi + D(\tau) |\xi|^2 \varphi \right] } \geq 0 \ \mbox{for a.a.}\
\tau \in (0,T),\\
\mbox{for any}\ \xi \in R^d \ \mbox{and any}\ \varphi \in C^1(\Q),\ \varphi \geq 0, \ \mbox{where}\ D \in L^1(0,T).
\end{split}
\end{equation}

The relevant weak--strong uniqueness principle reads, see \cite[Theorem 2.1]{FeiGhoJan}:

\begin{Proposition} [Weak--strong uniqueness] \label{EP1}

Let $\widehat{\vr} = r$, $\widehat{\vm} = r \vu$ be a weak solution to the Euler system \eqref{E1}, \eqref{E2} (specifically
the integral identities \eqref{E5}, \eqref{E6} are satisfied), where $r$, $\vu$ belong to the class \eqref{E12}.
Let $\mathcal{V}$, $\mathfrak{C}_{\rm kin}$, $\mathfrak{C}_{\rm int}$, $\mathfrak{C}_{\rm conv}$ be a dissipative measure--valued solution of the same problem with the initial data $[\vr_0, \vm_0, E_0]$ such that
\[
\vr_0 = r(0, \cdot),\
\vm_0 = r \vu (0, \cdot),\ E_0 =  \intO{ \left[ \frac{1}{2} r |\vu|^2 + P(r) \right](0, \cdot)}.
\]

Then
\[
\mathfrak{C}_{\rm kin} = \mathfrak{C}_{\rm int} = \mathfrak{C}_{\rm conv} = 0 \ \mbox{in} \ (0,T) \times \Q
\]
and
\[
\mathcal{V} =  \delta_{[\widehat{\vr}, \widehat{\vm}]}  \ \mbox{a.a. in}\ (0,T) \times \Q.
\]

\end{Proposition}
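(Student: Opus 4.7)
The plan is to employ the \emph{relative energy} (relative entropy) method, which is the natural tool for weak--strong uniqueness in hyperbolic systems with convex entropy. I would introduce the functional
\[
\mathcal{E}\left(\mathcal{V}, \mathfrak{C} \,\Big|\, r, \vu\right)(\tau) =
\int_{\Q} \left< \mathcal{V}_{\tau,x}; \frac{1}{2} \frac{|\tvm - \tvr \vu|^2}{\tvr} + P(\tvr) - P'(r)(\tvr - r) - P(r) \right> \dx
+ \int_{\Q} \mathrm{d}\mathfrak{C}_{\rm kin}(\tau) + \int_{\Q} \mathrm{d}\mathfrak{C}_{\rm int}(\tau),
\]
which is nonnegative by the convexity of $P$ and is designed to vanish exactly when $\mathcal{V} = \delta_{[r, r\vu]}$ and all defect measures are zero.

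The derivation then proceeds in three steps. First, starting from the energy balance \eqref{E11} and using $\vu$ as a test function in the momentum identity \eqref{E10} and $\tfrac{1}{2}|\vu|^2 - P'(r)$ in the continuity identity \eqref{E9}, I would obtain an integral identity whose left--hand side is $\mathcal{E}(\tau) - \mathcal{E}(0)$ and whose right--hand side is a sum of terms quadratic in $\tvm - \tvr \vu$ and $\tvr - r$, plus contributions from the defect measures tested against $\Grad \vu$. The compatibility condition \eqref{E8} is crucial here: it produces a contribution of the form $\int_0^\tau \int_\Q \Grad \vu : \D \mathfrak{C}_{\rm conv}\dt$ that must be controlled by $\mathcal{E}$ itself through the positive semidefiniteness and trace identity stated in \eqref{E8} (see Remark \ref{RRP1}).

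Second, the terms involving nonlinear compositions of $r, \vu$ cannot be manipulated naively because $(r, \vu)$ lies only in the Besov class \eqref{E12} with $\alpha > 1/2$. I would therefore introduce a spatial mollification $r_\delta, \vu_\delta$, perform the manipulations at the smooth level, and estimate the commutator errors $[\mathrm{mollification}, \mathrm{product}]$ by means of DiPerna--Lions/Ambrosio--type commutator lemmas; this is exactly where the threshold $\alpha > 1/2$ and the integrability $p \geq 4\gamma/(\gamma - 1)$ are used to make the remainders vanish as $\delta \to 0$. The one--sided inequality $\int_{\Q}[-\xi \cdot \vu (\xi \cdot \Grad)\varphi + D(\tau)|\xi|^2 \varphi] \geq 0$ then allows one to bound the dangerous term $\int \left< \mathcal{V}_{\tau,x}; \tfrac{(\tvm-\tvr\vu)\otimes(\tvm-\tvr\vu)}{\tvr}\right> : \Grad \vu$ by $D(\tau) \mathcal{E}(\tau)$, producing a closed Gronwall inequality
\[
\mathcal{E}(\tau) \leq \mathcal{E}(0) + C\int_0^\tau \bigl(1 + D(s)\bigr) \mathcal{E}(s) \dt.
\]

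Third, since the initial data match, $\mathcal{E}(0) = 0$, so Gronwall yields $\mathcal{E}(\tau) \equiv 0$. By positivity this forces $\mathfrak{C}_{\rm kin} = \mathfrak{C}_{\rm int} = 0$, and strict convexity of $P$ together with vanishing of the kinetic term gives $\mathcal{V}_{\tau,x} = \delta_{[r(\tau,x), r(\tau,x)\vu(\tau,x)]}$ a.a. Finally, using the trace identity $\tfrac{1}{2}\mathrm{trace}[\mathfrak{C}_{\rm conv}] = \mathfrak{C}_{\rm int} = 0$ combined with the positive semidefiniteness in \eqref{E8} forces $\mathfrak{C}_{\rm conv} = 0$ as well. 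The principal obstacle I expect is the commutator analysis in step two: the Besov threshold $\alpha > 1/2$ is sharp, and the interplay between the defect measure contribution $\int \Grad \vu : \D \mathfrak{C}_{\rm conv}$ and the one--sided Lipschitz bound must be handled with care so that the final estimate is genuinely of Gronwall type.
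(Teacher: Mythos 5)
The paper does not actually prove Proposition~\ref{EP1}: it is imported verbatim from the reference \cite[Theorem 2.1]{FeiGhoJan}, so there is no in-text argument to compare against. That said, your outline is a faithful reconstruction of the strategy used in that reference: the relative energy functional you write down is the correct one, the compatibility condition \eqref{E8} is indeed what lets the defect contribution $\int \Grad \vu : \D \mathfrak{C}_{\rm conv}$ be absorbed together with the Young-measure term $\left< \mathcal{V}; \frac{(\tvm - \tvr\vu)\otimes(\tvm - \tvr\vu)}{\tvr}\right> : \Grad\vu$ via the one-sided condition in \eqref{E12}, and the endgame (Gronwall with $D \in L^1$, then strict convexity of $P$ plus the trace identity to kill $\mathfrak{C}_{\rm conv}$) is exactly right. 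Two remarks on the delicate middle step. First, your attribution of the mollification argument to DiPerna--Lions/Ambrosio commutators is slightly off in spirit: the Besov hypothesis $\alpha > \frac12$, $p \geq \frac{4\gamma}{\gamma-1}$ is used in the Constantin--E--Titi/Onsager fashion to show that the \emph{weak} comparison solution $(r, r\vu)$ conserves energy and is admissible as a test pair in the relative energy inequality; the commutator being estimated is the mollification error in products like $r\vu\otimes\vu$ and $p(r)$, not a transport-equation commutator. Second, note that $\mathfrak{C}_{\rm conv}$ never enters the relative energy functional itself, so the Gronwall closure genuinely depends on the trace identity $\frac12\,{\rm trace}[\mathfrak{C}_{\rm conv}] = \mathfrak{C}_{\rm kin}$ from \eqref{E8} (the paper's Remark~\ref{RRP1} writes $\mathfrak{C}_{\rm int}$ there, which appears to be a typo) to dominate the defect term by the kinetic defect already present in $\mathcal{E}$. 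With those caveats your plan is sound; the full details are in \cite{FeiGhoJan}.
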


As shown in \cite{FeiGhoJan}, there are ``genuine'' weak solutions satisfying \eqref{E12}, in particular
the 1D rarefaction waves emanating from (discontinuous) Riemann data.

\subsection{Dissipative solutions}
\label{DiSo}

Following \cite{BreFeiHof19} we finally introduce the concept of dissipative solution of the Euler system:

\begin{Definition} [Dissipative solution] \label{ED2}

We say that
\[
\varrho \in C_{{\rm weak}}([0,T]; L^\gamma(\Q)),\ \vm \in C_{{\rm weak}}([0,T]; L^\frac{2\gamma}{\gamma + 1}(\Q; R^D)),
E \in BV [0,T]
\]
is a \emph{dissipative solution} of the Euler system \eqref{E1}, \eqref{E2}, with the initial data \eqref{E3} and the initial energy
$E_0$, if there exists a dissipative measure--valued solution in the sense of Definition \ref{ED1}, with the total energy $E$, such that
\[
\vr(t,x) = \left< \mathcal{V}_{t,x}; \tvr \right>,\
\vm(t,x) = \left< \mathcal{V}_{t,x}; \tvm \right> \ \mbox{for a.a.}\ (t,x) \in (0,T) \times \Q.
\]

\end{Definition}

Finally, we reformulate the weak--strong uniqueness principle in terms of dissipative solutions.

\begin{Proposition} [Weak--strong uniqueness] \label{EP2}

Let $\widehat{\vr} = r$, $\widehat{\vm} = r \vu$ be a weak solution to the Euler system \eqref{E1}, \eqref{E2} (specifically
the integral identities \eqref{E5}, \eqref{E6} are satisfied), where $r$, $\vu$ belong to the class \eqref{E12}.
Let $[\vr, \vm, E]$ be a dissipative solution of the same problem with the initial data $[\vr_0, \vm_0, E_0]$ such that
\[
\vr_0 = r(0, \cdot),\
\vm_0 = r \vu (0, \cdot),\ E_0 =  \intO{ \left[ \frac{1}{2} r |\vu|^2 + P(r) \right](0, \cdot)}.
\]

Then
\[
\vr = \widehat{\vr},\ \vm = \widehat{\vm}\ \mbox{a.a. in} \ (0,T) \times \Q,
\]
and
\[
E(\tau) = \intO{ \left[ \frac{1}{2} \frac{ |\widehat{\vm} |^2 }{\widehat{\vr}} + P(\widehat{\vr}) \right](\tau, \cdot)}
\ \mbox{for a.a.}\ \tau \in (0,T).
\]

\end{Proposition}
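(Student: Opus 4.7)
The plan is to reduce the statement directly to the measure--valued version, Proposition \ref{EP1}, by unpacking Definition \ref{ED2}. Since $[\vr, \vm, E]$ is a dissipative solution, there exists, by definition, a dissipative measure--valued solution $(\mathcal{V}, \mathfrak{C}_{\rm kin}, \mathfrak{C}_{\rm int}, \mathfrak{C}_{\rm conv})$ in the sense of Definition \ref{ED1}, with total energy $E$, whose barycenters recover $\vr$ and $\vm$, i.e.
\[
\vr(t,x) = \left< \mathcal{V}_{t,x}; \tvr \right>, \qquad \vm(t,x) = \left< \mathcal{V}_{t,x}; \tvm \right> \quad \mbox{for a.a.}\ (t,x) \in (0,T) \times \Q.
\]
The initial data of this measure--valued solution coincide with $[\vr_0, \vm_0, E_0]$ by hypothesis, which in turn match the initial trace of the weak solution $\widehat{\vr} = r$, $\widehat{\vm} = r\vu$.

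Next, I would invoke Proposition \ref{EP1} directly: since $(r, \vu)$ belongs to the class \eqref{E12} and generates a weak solution $[\widehat{\vr}, \widehat{\vm}]$ of \eqref{E1}, \eqref{E2} in the sense of \eqref{E5}--\eqref{E6} sharing the same initial data with the measure--valued solution above, the weak--strong uniqueness principle yields
\[
\mathfrak{C}_{\rm kin} = \mathfrak{C}_{\rm int} = \mathfrak{C}_{\rm conv} = 0 \quad \mbox{in}\ (0,T) \times \Q, \qquad \mathcal{V}_{t,x} = \delta_{[\widehat{\vr}(t,x), \widehat{\vm}(t,x)]} \quad \mbox{for a.a.}\ (t,x).
\]

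From here the conclusion is immediate. Taking barycenters in the above identity gives
\[
\vr(t,x) = \left< \delta_{[\widehat{\vr}, \widehat{\vm}]}; \tvr \right> = \widehat{\vr}(t,x), \qquad \vm(t,x) = \widehat{\vm}(t,x) \quad \mbox{a.a. in}\ (0,T)\times \Q.
\]
For the energy identity \eqref{E11}, the vanishing of $\mathfrak{C}_{\rm kin}$ and $\mathfrak{C}_{\rm int}$ together with $\mathcal{V} = \delta_{[\widehat{\vr}, \widehat{\vm}]}$ reduces the formula to
\[
E(\tau) = \intO{ \left[ \frac{1}{2} \frac{|\widehat{\vm}|^2}{\widehat{\vr}} + P(\widehat{\vr}) \right](\tau, \cdot) } \quad \mbox{for a.a.}\ \tau \in (0,T),
\]
which is exactly the claim.

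There is essentially no obstacle: all the analytic work is absorbed in Proposition \ref{EP1}, and the present statement is really a translation from the measure--valued formalism to the barycenter formulation provided by Definition \ref{ED2}. The only point requiring minor care is the matching of initial data, which is ensured by the hypothesis on $[\vr_0, \vm_0, E_0]$ and the fact that the weak solution $[\widehat{\vr}, \widehat{\vm}]$ attains $[r(0,\cdot), r\vu(0,\cdot)]$ in the relevant weak continuity sense; this is already accounted for in the formulation of Proposition \ref{EP1}.
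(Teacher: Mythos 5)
Your proposal is correct and follows exactly the route the paper intends: Proposition \ref{EP2} is stated as an immediate reformulation of Proposition \ref{EP1}, obtained by unpacking Definition \ref{ED2}, applying the measure--valued weak--strong uniqueness, and passing to barycenters in \eqref{E11}. Nothing is missing.
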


\section{Finite volume scheme for the isentropic Euler system}
\label{N}

We illustrate the abstract theory applying the results to the numerical solutions resulting from a finite volume approximation of the isentropic Euler system.  Our strategy is inspired by the fundamental Lax equivalence theorem \cite{Lax} on the convergence of
{\em consistent} and {\em stable} numerical schemes:
\begin{enumerate}
\item {\bf Existence.} We first recall existence of the approximate numerical solutions $\vrh$, $\vmh$, with the associated
energy $\Eh$ for any discretization parameter $h > 0$.

\item {\bf Stability.} We make sure that the scheme is \emph{energy dissipative}. In particular, we recover  the same energy bounds as for the continuous problem, including a discrete form of the energy inequality.

\item {\bf Consistency.} We establish a consistency formulation and find suitable bounds on the error terms.

\item {\bf Convergence.} Using the technique developed in Section \ref{A}, we associate to each sequence of numerical approximations
$\{ \vrhn, \vmhn \}_{n=1}^\infty$ its Young measure representation $\delta_{[\vrhn, \vmhn]}$. We perform the limit
$h_n \searrow 0$ and show that, up to a subsequence, the $\K-$limit of $\{ \delta_{[\vrhn, \vmhn]} \}_{n=1}^\infty$ is a Young measure
$\mathcal{V}$ associated to a dissipative measure--valued solution. In particular, we recover the strong convergence
of the arithmetic means
\[
\frac{1}{N} \sum_{n=1}^N \vrhn (t,x) \to \vr(t,x),\
\frac{1}{N} \sum_{n=1}^N \vmhn (t,x) \to \vm(t,x)\ \mbox{as}\ N \to \infty \ \mbox{for a.a.} \ (t,x) \in (0,T) \times \Q,
\]
where $\vr$, $\vm$ is a dissipative solution of the Euler system in the sense of Definition \ref{ED2}.

\item {\bf Unconditional convergence.} Applying the weak--strong uniqueness principle we show \emph{unconditional} strong
$L^1-$convergence of the numerical solutions provided that the Euler system admits a weak solution belonging to the regularity
class \eqref{E12}.

\end{enumerate}

We infer that the Young measure framework can substitute the linearity property. Accordingly, our result can be seen as a {\em nonlinear version of the Lax equivalence theorem}.

\subsection{Preliminaries  of  finite volume methods}

We start by introducing the basic notation concerning the mesh, temporal and spatial discretization, and discrete differential operators.
We recall that the spatial domain coincides with the flat torus $\Q$.  We shall write  $A \aleq B$ if $A \leq cB$ for a generic positive constant $c$ independent of $h.$

\subsubsection{Mesh}

The  grid $\mathcal{T}$ is a family of compact  parallelepiped elements,
\[
\Q =  \bigcup_{K \in \mathcal{T}} K, \ K = \prod_{i=1}^d [0, h_i] + x_K, \ 0 < \lambda h \leq h_i \leq h,\
i=1,\dots,d, \ 0< \lambda < 1,
\]
where $h$ denotes the mesh size and $x_K$ the position of the center of mass of an element $K.$
The intersection
$K \cap L$ of two elements $K,L \in \mathcal{T}$, $K \ne L$, is either empty, or a common vertex, a common edge, or a common
face.

The symbol $\mathcal{E}$ denotes the set of all faces $\sigma$.  To each face we associate a normal vector $\vn$. We write $\mathcal{E}(K)$ for the family of all boundary faces of an element $K$, $K | L = \mathcal{E}(K) \cap \mathcal{E}(L)$.

\subsubsection{Discrete function spaces}

We denote $\mathcal{Q}_h$ the space of $L^\infty$ functions constant on each element $K \in \mathcal{T}$, with the associated
projection:
\[
\Pi_{\mathcal{T}}: L^1(\Q) \to \mathcal{Q}_h,\ \Pi_{\mathcal{T}} v = \sum_{K \in \mathcal{T}} 1_K \frac{1}{|K|} \int_K
v \dx.
\]

\subsubsection{Numerical flux}

For a piecewise continuous function $v$ we define
\[
v^{\rm out}(x) = \lim_{\delta \to 0+} v(x + \delta \vc{n}),\
v^{\rm in}(x) = \lim_{\delta \to 0+} v(x - \delta \vc{n}),\
\Ov{v}(x) = \frac{v^{\rm in}(x) + v^{\rm out}(x) }{2},\
\jump{ v(x) }  = v^{\rm out}(x) - v^{\rm in}(x)
\]
whenever $x \in \sigma \in \mathcal{E}$.

Given a velocity field $\vc{v} \in \mathcal{Q}_h(\Q; R^d)$ and a transported scalar function $r \in \mathcal{Q}_h$, the
\emph{upwind flux} is defined as
\[
{\rm Up}[r, \vc{v}] = r^{\rm up} \vc{v} \cdot \vc{n} = r^{\rm in} [\vc{v} \cdot \vn]^+ + r^{\rm out} [\vc{v} \cdot \vc{n} ]^-
= \Ov{r} \ \Ov{\vc{v}} \cdot \vn - \frac{1}{2} |\Ov{\vc{v}} \cdot \vc{n} | \jump{r}
\]
on any face $\sigma$,
where
\[
[f]^{\pm} = \frac{f\pm |f|}{2} \quad \mbox{and} \quad
r^{\rm up} =
\begin{cases}
 r^{\rm in} & \mbox{if} \ \Ov{\vc{v}} \cdot \vc{n} \geq 0, \\
r^{\rm out} & \mbox{if} \  \Ov{\vc{v}} \cdot \vc{n} < 0.
\end{cases}
\]

Finally, we define the \emph{numerical flux},
\begin{align}\label{num_flux}
F_h(r,\vv)
={\rm Up}[r, \vv] - h^{\alpha} \jump{ r } =
 \Ov{r} \ \Ov{\vc{v}} \cdot \vn - \frac{1}{2}\Big( h^\alpha + |\Ov{\vc{v}} \cdot \vc{n} | \Big) \jump{r}, \quad \alpha >0.
\end{align}
If $\vc{r}$ is a vector the numerical flux is defined componentwise.

\subsubsection{Time discretization}
For a given time step $\TS \approx h>0$,
we denote the approximation of a function  $v$ at time $t^k= k\TS$ by $v^k$ for $k=1,\ldots,N_T(=T/\TS)$.  The time derivative is discretized by the backward Euler method,
\[
D_t  v^k = \frac{ v^k-  v^{k-1}}{\TS},\ \mbox{ for } k=1,2,\ldots, N_T.
\]
We introduce the piecewise constant functions on time interval,
\begin{equation*}
v(t,\cdot)  = v^0 \mbox{ for }  t < \TS,\ v(t,\cdot)= v^k \mbox{ for } t\in [k\TS,(k+1)\TS),\ k=1,2,\ldots,N_T.\\
\end{equation*}
Finally, we set
\[
 D_t  v = \frac{ v(t,\cdot) -  v(t - \Delta t,\cdot)}{\TS}
 \quad \mbox{ for } t \in [0,T].
\]

\subsection{Numerical scheme}
Using the above notation we are ready to introduce an implicit  finite volume scheme to approximate the Euler system \eqref{E1}.

\medskip

\noindent
Given the initial values  $(\vrh^0,\vuh^0) =(\Pim\vr_0, \Pim\vu_0),$ find $(\vr_h^k,\vu_h^k) \in \mathcal{Q}_h\times \mathcal{Q}_h$ satisfying for $k=1,\ldots,N_T$ the following equations
\begin{subequations}\label{scheme}
\begin{align}
&\intO{ D_t \vrh^k \varphi_h } - \sum_{ \sigma \in \facesint } \intSh{  F_h(\vrh^k,\vuh^k)
\jump{\varphi_h}   } = 0 \quad \mbox{for all } \varphi_h \in \mathcal{Q}_h,\label{scheme_den}\\
&\intO{ D_t  (\vrh^k \vuh^k) \cdot \bfphi_h } - \sum_{ \sigma \in \facesint } \intSh{ {\bf F}_h(\vrh^k \vuh^k,\vuh^k)
\cdot \jump{\bfphi_h}   }- \sum_{ \sigma \in \facesint } \intSh{ \Ov{p(\vr_h^k)} \vc{n} \cdot \jump{ \bfphi_h }  } \nonumber \\
&= - h^\beta \sum_{ \sigma \in \facesint } \intSh{ \jump{ \vuh^k }  \cdot \jump{ \bfphi_h }  },
\quad \mbox{for all }
\bfphi_h \in \mathcal{Q}_h( \Q; R^d) , \  \beta > -1. \label{scheme_mom}
\end{align}
\end{subequations}
The weak formulation \eqref{scheme}  of the scheme can be rewritten in the standard  per cell finite volume formulation
for all $K \in \grid$:
\begin{equation}\label{scheme_fv}
\begin{aligned}
& (\vrh^0,\vuh^0) =(\Pim\vr_0, \Pim\vu_0), \\
&D_t \vr^k _K + \sum_{\sigma \in \facesK} \frac{|\sigma|}{|K|} F_h(\vrh^k,\vuh^k) =0,
 \\
&D_t (\vrh^k \vuh^k)_K + \sum_{\sigma \in \facesK} \frac{|\sigma|}{|K|}
\left({\bf F}_h(\vrh^k \vuh^k,\vuh^k)  + \Ov{p(\rho_h^k)} \vc{n}
- h^{\beta} \jump{\vuh^k}\right) =0.
\end{aligned}
\end{equation}

Similarly to \cite{FeiLukMizShe}, we prefer the formulation in primitive variables $(\vr, \vu)$ instead of  conservative ones $(\vr, \vm
= \vr \vu)$. Indeed the scheme (\ref{scheme_fv}) mimics the  physical process of \emph{vanishing
viscosity limit} in the Navier--Stokes system. As a result
we get uniform stability estimates on
$\vrh$ and $\vu_h$ without any CFL condition.

\subsection{Solvability of the numerical scheme}

Here and hereafter, we impose a technical but physically grounded hypothesis
\[
1 < \gamma < 2,
\]
noting the physical range of the adiabatic exponent for gases $1 < \gamma \leq \frac{5}{3}$.

At the discrete level, the scheme \eqref{scheme_fv} coincides with that one used for discretization of the Navier--Stokes system in \cite{FeiLukMizShe}, with the viscosity  coefficients $\mu = - \lambda \equiv h^{\beta + 1}$. As shown by Ho\v sek and She \cite{HosShe}, there exists a numerical solution
$[\vrh^k, \vuh^k]$, or extended in time as $[\vrh, \vuh]$, for any given initial data and any $h > 0$. Moreover, we have
\[
\vrh^k > 0 \ \mbox{for any fixed}\ h > 0, \ k = 1,\dots \ \mbox{whenever}\ \vrh^0 > 0.
\]

\subsubsection{Discrete energy balance}

The scheme \eqref{scheme} is energy dissipative. More specifically, as shown in \cite[Theorem 3.3]{FeiLukMizShe},
\begin{equation} \label{N1}
\begin{split}
D_t &\intO{ \left[ \frac{1}{2} \vr^k_h |\vuh^k|^2 + P(\vrh^k) \right] }
+ \sum_{\sigma \in \mathcal{E}} \int_\sigma \left( h^\alpha \Ov{\vrh^k} \jump{ \vuh^k }^2 + h^\beta \jump{ \vuh^k }^2  \right)
\ds \\
&= - \frac{\Delta t}{2} \intO{ P''(\xi) |D_t \vrh^k |^2 } - \frac{1}{2} \sum_{\sigma \in \mathcal{E}} \int_\sigma
P''(\eta) \jump{ \vrh^k }^2 \left( h^\alpha + |\Ov{\vuh^k} \cdot \vc{n}| \right) \ds\\
&- \frac{\Delta t}{2} \intO{ \vrh^{k-1} |D_t \vuh^k|^2 }  - \frac{1}{2}\sum_{\sigma \in \mathcal{E}} \int_\sigma
(\vrh^k)^{\rm up} |\Ov{\vuh^k} \cdot \vc{n}| \jump{ \vuh^k }^2 \ds,
\end{split}
\end{equation}
where $\xi \in {\rm co}\{ \vrh^{k-1}, \vrh^k \}$, $\eta \in {\rm co}\{ \vr^k_K, \vr^k_L \}$ with the notation $\co{A}{B} \equiv [ \min\{A,B\} , \max\{A,B\}].$

\subsubsection{Consistency formulation}

In the consistency formulation, we rewrite the scheme in the form of the limit system perturbed by consistency errors. Referring to
\cite[Theorem 4.1]{FeiLukMizShe}, we have:
\begin{equation*}
- \intO{ \vrh^0 \varphi(0, \cdot) } = \int_0^T \intO{ \Big[ \vrh \partial_t \varphi + \vrh \vuh \cdot \Grad \varphi \Big] } \dt
+ \int_0^T \intO{ e_1(t, h, \varphi) } \dt
\end{equation*}
for any $\varphi \in C^3_c([0,T) \times \Q)$,
\begin{equation*}
\begin{split}
- \intO{ \vrh^0 \vuh^0 \cdot \bfphi(0, \cdot) } &= \int_0^T \intO{ \Big[ \vrh \vuh \cdot \partial_t \bfphi + \vrh \vuh \otimes
\vuh : \Grad \bfphi + p(\vrh) \Div \bfphi \Big] } \dt\\
&- h^\beta \int_0^T \sum_{\sigma \in \mathcal{E}} \int_\sigma \jump{ \vuh^k } \cdot \jump{ \Pim \bfphi }\ \ds \dt
+ \int_0^T \intO{ e_2(t, h, \bfphi) } \dt
\end{split}
\end{equation*}
for any $\bfphi \in C^3_c([0,T) \times \Q; R^d)$.
The error terms $e_1(t, h, \varphi)$, $e_2(t, h, \bfphi)$ were identified in \cite[Section~4]{FeiLukMizShe}.
Our goal is to show that these  error terms vanish in the asymptotic limit $h \to 0$. Similarly to \cite{FeiLukMizShe}, the necessary bounds are deduced
from the energy inequality \eqref{N1}. We focus only on the integrals depending on the velocity that must be handled
differently in the present setting. These are:
\begin{equation*}
\begin{aligned}
& E_1(r_h)= \frac12 \int_0^T \sum_{\sigma \in \facesint} \intSh{
  |\Ov{\vuh} \cdot \vc{n}| \jump{r_h }  \jump{ \Pim \varphi}  }  \dt   ,
  \\& E_2(r_h)= \frac14  \int_0^T \sum_{\sigma \in \facesint} \intSh{
 \jump{\vuh} \cdot \vc{n}   \jump{r_h }  \jump{  \Pim \varphi}  }  \dt
\end{aligned}
\end{equation*}
for $r_h$ being $\vr_h$ or $\vr_h u_{i,h},$ $i=1,\ldots,d.$

 Analogously to \cite{FeiLukMizShe}, we get
\begin{align*}
 E_1(r_h) & \aleq  h || \varphi ||_{C^3} ||r_h ||_{L^2(0,T;L^2(\Omega))}  \left[ \|\vu_h\|_{L^2(0,T;L^2(\Omega))} +  \left(  \int_0^T \sum_{\sigma \in \facesint}\intSh{  \frac{\jump{\vu_h }^2 }{h}} \right)^{1/2}
\right]
\\ &\aleq  h || \varphi ||_{C^3}  ||r_h ||_{L^2(0,T;L^2(\Omega))}  \left[ 1+
\left( \int_0^T \sum_{\sigma \in \facesint} \intSh{  \frac{\jump{\vu_h }^2 }{h}} \right)^{1/2}
 \right] \\
& \aleq h   || \varphi ||_{C^3}\ h^{-\frac{\alpha+2}{2\gamma}}  \left[ 1 + h^\frac{-(\beta + 1)}{2}\right]
 \aleq h^{\delta_1} || \varphi ||_{C^3} \quad \mbox { for }\delta_1 = 1 - \left(\frac{\alpha + 2 }{2 \gamma} + \frac{\beta + 1}{2}\right),
\end{align*}
where \cite[Lemma~2.4]{FeiLukMizShe} combined with  \eqref{N1} and the  estimates from \cite[Lemma~3.5]{FeiLukMizShe} have been used to control the norms $||\vu_h ||_{L^2(0,T;L^2(\Omega))}  $ and
$||r_h ||_{L^2(0,T;L^2(\Omega))},$ respectively.

Furthermore, we have
\begin{equation*}
\begin{aligned}
E_2(\vr_h) &  \aleq
 h ||\varphi ||_{C^3} \left( \int_0^T \sum_{\sigma \in \facesint} \intSh{ \jump{\vuh}^2 }  \dt  \right)^{1/2}  \left( \int_0^T \sum_{\sigma \in \facesint} \intSh{ \jump{\vrh}^2 }  \dt  \right)^{1/2}
\\& \aleq h ||\varphi ||_{C^3} \left( \int_0^T \sum_{\sigma \in \facesint} \intSh{ \jump{\vuh}^2 }  \dt  \right)^{1/2}
\left( \int_0^T \sum_{\sigma \in \facesint} \intSh{ \Ov{\vrh}^2 }  \dt  \right)^{1/2}
\\&
\aleq h^{\frac 1 2 -\frac{\beta}{2}} ||\varphi ||_{C^3} ||\vrh||_{L^2L^2}
\aleq h^{\delta_1} ||\varphi ||_{C^3}, \quad \mbox { for }\delta_1 = 1 - \left(\frac{\alpha + 2 }{2 \gamma} + \frac{\beta + 1}{2}\right),
\end{aligned}
\end{equation*}
and exactly as in \cite{FeiLukMizShe},
\begin{align*}
 E_2(\vr_h\vu_h) &\aleq  h \left( \int_0^T \sum_{\sigma \in \facesint} \intSh{  \Ov{\vrh } \jump{\vuh}^2 } \right)^{1/2}
 \left( \int_0^T \sum_{\sigma \in \facesint} \intSh{  \Ov{\vrh } |\Ov{\vuh}|^2 } \right)^{1/2} \\
 &+ h \int_0^T \sum_{\sigma \in \facesint} \intSh{  \Ov{\vrh } \jump{\vuh}^2 } \aleq h^{(1-\alpha)/2}+h^{1-\alpha} \aleq h^{\delta_2} \quad \mbox { for }\delta_2=\frac{1-\alpha}{2}.
\end{align*}
Clearly, $\delta_1,$ $\delta_2 > 0$ whenever $-1 < \beta < 1- \frac{\alpha + 2}{\gamma}$ and $\alpha<1.$

Finally, diffusive correction in the momentum equation
$$d(h,\bfphi):= -h^\beta \int_0^T \sum_{\sigma \in \mathcal{E}} \int_\sigma \jump{ \vuh^k } \cdot
\jump{ \Pim \bfphi }\ \ds \dt
$$
 can be handled as follows
\begin{align*}
&\left|h^\beta \int_0^T \sum_{\sigma \in \mathcal{E}} \int_\sigma \jump{ \vuh^k } \cdot
\jump{ \Pim \bfphi }\ \ds \dt \right|
\leq h^\frac{\beta + 1}{2}\left(  h^\beta\ \int_0^T \sum_{\sigma \in \mathcal{E}} \int_\sigma \jump{ \vuh^k } ^2 \ds \dt \right)^{1/2} \cdot
\\
&\left(  \int_0^T \sum_{\sigma \in \mathcal{E}} \int_\sigma \frac{\jump{ \Pim \bfphi }^2}{h} \ds \dt \right)^{1/2}
\aleq h^{\frac{\beta+1}{2}} || \bfphi  ||_{C^3} \to 0 \ \mbox { as } h \to 0 \ \mbox{ for } \beta >-1.
\end{align*}

Combining the above estimates with those obtained in \cite[Section 4]{FeiLukMizShe}, we obtain:

\begin{Proposition} \label{NP1}

Let
\begin{equation} \label{Npar}
 0<\alpha <1 ,\
-1 < \beta < \left( 1 - \frac{2}{\gamma} \right) - \frac{\alpha}{\gamma}.
\end{equation}
Then the finite volume method \eqref{scheme_fv} is consistent with the Euler  equations \eqref{E1}, i.e.
\begin{equation} \label{N2}
- \intO{ \vrh^0 \varphi(0, \cdot) } = \int_0^T \intO{ \Big[ \vrh \partial_t \varphi + \vrh \vuh \cdot \Grad \varphi \Big] } \dt
+ \int_0^T \intO{ e_1(t, h, \varphi) } \dt
\end{equation}
for any $\varphi \in C^3_c([0,T) \times \Q)$,
\begin{equation} \label{N3}
\begin{split}
- \intO{ \vrh^0 \vuh^0 \cdot \bfphi(0, \cdot) } &= \int_0^T \intO{ \Big[ \vrh \vuh \cdot \partial_t \bfphi + \vrh \vuh \otimes
\vuh : \Grad \bfphi + p(\vrh) \Div \bfphi \Big] } \dt\\
&+ d(h, \bfphi) + \int_0^T \intO{  e_2(t, h, \bfphi) } \dt
\end{split}
\end{equation}
for any $\bfphi \in C^3_c([0,T) \times \Q; R^d)$, with the consistency errors
\[
\begin{split}
&\left|\int_0^T \intO{ e_1(t,h, \varphi)  } \dt \right| \aleq h^\delta \| \varphi \|_{C^3},\\
&\left|\int_0^T \intO{ e_2(t,h, \bfphi) } \dt \right| \aleq h^\delta \| \bfphi \|_{C^3}, \\
& \Big | d(h, \bfphi) \Big | \aleq h^\frac{\beta +1}{2} \| \bfphi \|_{C^3}
\end{split}
\]
for a certain $\delta > 0$.
\end{Proposition}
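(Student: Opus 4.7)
The plan is to derive the consistency identities by testing the scheme against suitable projections of smooth functions and then show that every residual term decays as a positive power of $h$. First, I would insert $\varphi_h = \Pim \varphi$ in \eqref{scheme_den} and $\bfphi_h = \Pim \bfphi$ in \eqref{scheme_mom}, apply Abel summation in time from $k=1$ to $N_T$ to move the discrete time derivative onto the test function, and collect the surface sums into volume integrals. The resulting identities coincide with \eqref{N2}, \eqref{N3} modulo consistency errors that naturally split into three families: (i) temporal truncation errors coming from $D_t$ versus $\partial_t$; (ii) spatial projection errors from replacing $\Pim \varphi$ by $\varphi$ and $\Pim \bfphi$ by $\bfphi$; and (iii) numerical dissipation errors originating from the upwind splitting of $F_h$ and from the artificial viscosity $h^\alpha \jump{r}$ in \eqref{num_flux}.

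Second, every term that is structurally the same as in the Navier--Stokes scheme of \cite{FeiLukMizShe} I would bound by quoting the corresponding estimates of \cite[Section 4]{FeiLukMizShe} verbatim; these contribute a $h^\delta$ bound controlled by $\|\varphi\|_{C^3}$ or $\|\bfphi\|_{C^3}$. The genuinely new contributions are the velocity-dependent surface integrals $E_1(r_h)$ and $E_2(r_h)$ (with $r_h \in \{\vrh,\vrh u_{i,h}\}$), which were displayed just before the proposition. These I would treat by face-wise Cauchy--Schwarz combined with the stability estimates $h^\beta \int_0^T\sum_{\sigma}\int_\sigma \jump{\vuh}^2\, \ds\, \dt \aleq 1$ and $h^\alpha \int_0^T \sum_\sigma \int_\sigma \Ov{\vrh}\jump{\vuh}^2\,\ds\,\dt \aleq 1$ extracted from the discrete energy balance \eqref{N1}, together with the $L^\infty L^\gamma$ bound on $\vrh$ and the jump control on $\vrh$ from \cite[Lemma 3.5]{FeiLukMizShe}. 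These yield exactly the exponents $\delta_1 = 1 - \frac{\alpha+2}{2\gamma} - \frac{\beta+1}{2}$ and $\delta_2 = (1-\alpha)/2$ appearing in the displayed computation; positivity of $\min\{\delta_1,\delta_2\}$ is precisely the assumption \eqref{Npar}.

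Third, the diffusive correction $d(h,\bfphi)$ I would estimate by writing $h^\beta = h^{(\beta+1)/2}\cdot h^{(\beta-1)/2}$ and splitting by Cauchy--Schwarz so that one factor pairs with the velocity dissipation $h^\beta \int\!\sum\!\int_\sigma \jump{\vuh}^2\,\ds\,\dt \aleq 1$ from \eqref{N1}, while the other pairs with the discrete inverse trace estimate $\sum_\sigma \int_\sigma \jump{\Pim\bfphi}^2\,\ds \aleq h\|\bfphi\|_{C^1}^2$; this produces the bound $|d(h,\bfphi)| \aleq h^{(\beta+1)/2}\|\bfphi\|_{C^3}$, which is a positive power of $h$ exactly because $\beta > -1$.

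The main obstacle is the bookkeeping of $h$-exponents. Unlike in the Navier--Stokes analysis, no physical viscosity is available, so the only control on $\jump{\vuh}^2$ comes from the numerical viscosities weighted by $h^\alpha$ and $h^\beta$; to extract strictly positive powers of $h$ in $E_1, E_2$ and simultaneously in $d(h,\bfphi)$ forces a nontrivial compatibility between $\alpha$ and $\beta$, namely $-1 < \beta < 1 - \frac{2}{\gamma} - \frac{\alpha}{\gamma}$ with $0<\alpha<1$. The interval is nonempty precisely because $\gamma < 2$ has been assumed, so that the technical restriction on the adiabatic exponent enters the argument only at this final combinatorial step.
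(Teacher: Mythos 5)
Your proposal follows essentially the same route as the paper: the consistency identities are obtained by testing the scheme with $\Pim\varphi$, $\Pim\bfphi$ and quoting the Navier--Stokes analysis of \cite[Section 4]{FeiLukMizShe} for the structurally identical error terms, while the new velocity-dependent face integrals $E_1$, $E_2$ and the diffusive correction $d(h,\bfphi)$ are estimated by Cauchy--Schwarz against the numerical dissipation terms in the discrete energy balance \eqref{N1}, yielding the exponents $\delta_1$, $\delta_2$ and $\tfrac{\beta+1}{2}$ exactly as in the text. One peripheral remark is wrong, though it does not affect the proof (since \eqref{Npar} is a hypothesis): the nonemptiness of the interval $-1<\beta<1-\tfrac{2+\alpha}{\gamma}$ requires $\gamma>1+\tfrac{\alpha}{2}$, i.e.\ it is \emph{helped} by larger $\gamma$ and achieved for $\gamma>1$ by taking $\alpha$ small; the restriction $\gamma<2$ is not what makes it nonempty, but rather enters through the density estimates inherited from \cite{FeiLukMizShe}.
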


\subsection{Convergence}

At this stage, we are ready to apply the machinery developed in Section \ref{A}. Recall that
\[
Q = (0,T) \times \Q \subset R^{d + 1},\ S = [0, \infty) \times R^d \subset R^{d+1}.
\]
For a sequence of discretization parameters $h_n \searrow 0$, consider a sequence
\[
\vU_{h_n} = [ \vr_{h_n}, \vm_{h_n} ], \ \vm_{h_n} = \vr_{h_n} \vu_{h_n}
\]
of numerical solutions resulting from \eqref{scheme_fv}. In view of the energy bounds \eqref{N1}, there exists a subsequence
$n_k \to \infty$ such that for $\vU_k \equiv [ \vr_k, \vm_k ] \equiv [\vr_{h_{n_k}}, \vm_{h_{n_k}}]$ we get:
\begin{equation} \label{N4}
\begin{split}
\frac{1}{2} \frac{|\vm_k|^2}{\vr_k} &\to \mathfrak{M}_{\rm kin} \ \mbox{weakly-(*) in}\
L^\infty(0,T; \mathcal{M}^+(\Q)),\\
P(\vr_k) &\to \mathfrak{M}_{\rm int} \ \mbox{weakly-(*) in}\
L^\infty(0,T; \mathcal{M}^+(\Q)), \\
\frac{\vm_k \otimes \vm_k}{\vr_k} &\to \mathfrak{M}_{\rm conv} \ \mbox{weakly-(*) in}\
L^\infty(0,T; \mathcal{M}(\Q; R^{d \times d}_{\rm sym})),\\
p(\vr_k) &\to (\gamma - 1) \mathfrak{M}_{\rm int} \ \mbox{weakly-(*) in}\
L^\infty(0,T; \mathcal{M}^+(\Q))
\end{split}
\end{equation}
as $k \to \infty$.

Next, by the same token, we observe that $[\vr_k, \vm_k]$ possesses the weak$-\K$ property. In particular, passing to another
subsequence as the case may be, we may suppose that
\begin{equation} \label{N5}
\delta_{[\vr_k, \vm_k]} \toKY \mathcal{V},
\end{equation}
where $\mathcal{V}_{t,x} \in \mathcal{P}(S)$, $(t,x) \in Q$ is a Young measure. Moreover, by virtue of Proposition \ref{PP1},
\begin{equation} \label{N6}
\delta_{[\vr_k, \vm_k]} \toNY \mathcal{V}.
\end{equation}
We denote
\[
\vr(t,x) = \left< \mathcal{V}_{t,x}; \tvr \right>,\ \vm(t,x) = \left< \mathcal{V}_{t,x}; \tvm \right>,\
(t,x) \in Q.
\]

Evoking again the energy bound \eqref{N1}, we deduce that the convex functions
\[
[\tvr, \tvm] \mapsto \frac{|\tvm|^2}{\tvr},\ \tvr \mapsto P(\tvr)
\]
are $\mathcal{V}_{t,x}$ integrable for a.a. $(t,x) \in Q$. We set
\begin{equation} \label{N7}
\begin{split}
\mathfrak{C}_{\rm kin} &= \mathfrak{M}_{\rm kin} - \left<\mathcal{V}_{t,x}; \frac{1}{2} \frac{|\tvm|^2}{\tvr} \right> (dx \otimes \dt),\\
\mathfrak{C}_{\rm int} &= \mathfrak{M}_{\rm int} - \left<\mathcal{V}_{t,x}; P(\tvr) \right> (dx \otimes \dt),\\
\mathfrak{C}_{\rm conv} &= \mathfrak{M}_{\rm conv} - \left<\mathcal{V}_{t,x}; \frac{ \tvm \otimes \tvm }{\tvr} \right> (dx \otimes \dt).
\end{split}
\end{equation}

We show that the concentration defect satisfies the compatibility condition \eqref{E8}. Let
\[
\chi_\ep (Y) = Y \ \mbox{for}\ 0 \leq Y \leq \frac{1}{\ep},\ \chi_\ep(Y) = \frac{1}{\ep} \ \mbox{for}\
Y \geq \frac{1}{\ep} .
\]
For $\mathbb{M} \in C([0,T] \times \Q; R^{d \times d}_{\rm sym})$, we have
\[
\begin{split}
\int_0^T &\int_{\Q} \mathbb{M} : \D \mathfrak{M}_{\rm conv}(t) \dt =
\lim_{k \to \infty} \int_0^T \intO{ \mathbb{M} : \frac{ \vm_k \otimes \vm_k }{\vr_k} } \dt \\ &=
\lim_{k \to \infty} \int_0^T \intO{ \left[ \mathbb{M} : \frac{ \vm_k }{|\vm_k|}  \otimes \frac{\vm_k}{|\vm_k| }
\frac{ \chi_\ep (|\vm_k|^2)} {\vr_k + \ep}  + \mathbb{M} : \frac{ \vm_k }{|\vm_k|}  \otimes \frac{\vm_k}{|\vm_k| }
\left(\frac{|\vm_k|^2}{\vr_k} - \frac{ \chi_\ep (|\vm_k|^2)} {\vr_k + \ep} \right) \right]} \dt\\
&= \int_0^T \intO{ \mathbb{M}: \left< \mathcal{V}_{t,x};  \frac{ \tvm }{|\tvm|}  \otimes \frac{\tvm}{|\tvm| }
\frac{ \chi_\ep (|\tvm|^2)} {\tvr_k + \ep}  \right> } \dt \\
&+ \lim_{k \to \infty} \int_0^T \intO{  \mathbb{M} : \frac{ \vm_k }{|\vm_k|}  \otimes \frac{\vm_k}{|\vm_k| }
\left(\frac{|\vm_k|^2}{\vr_k} - \frac{ \chi_\ep (|\vm_k|^2)} {\vr_k + \ep} \right) } \dt
\end{split}
\]
Letting $\ep \to 0$ we may use the Lebesgue convergence theorem to conclude that
\[
\int_0^T \int_{\Q} \mathbb{M} : \D \mathfrak{C}_{\rm conv}(t) \dt =
\lim_{\ep \to 0} \left[ \lim_{k \to \infty} \int_0^T \intO{  \mathbb{M} : \frac{ \vm_k }{|\vm_k|}  \otimes \frac{\vm_k}{|\vm_k| }
\left(\frac{|\vm_k|^2}{\vr_k} - \frac{ \chi_\ep (|\vm_k|^2)} {\vr_k + \ep} \right) } \dt \right]
\]
which implies \eqref{E8}.

Finally, as the discrete energy is non--increasing, we may apply Helly's selection theorem obtaining
\[
E_k \equiv \intO{ \left[ \frac{1}{2} \vr_k |\vu_k|^2 + P(\vr_k) \right] } \to E \ \mbox{pointwise in}\ [0,T],
\]
where $E$ is a non--decreasing function in $[0,T]$, with
\[
E(0-) = \intO{ \left[ \frac{1}{2} \frac{|\vm_0|^2}{\vr_0} + P(\vr_0) \right] }.
\]

Combining the previous observations with the estimates of the consistency errors established in Proposition \ref{NP1}, we obtain
the main result concerning convergence of the numerical scheme \eqref{scheme_fv}.

\begin{Theorem} \label{NT1}
Let the initial data $\vr_0$, $\vm_0$ belong to the class
\[
\vr_0 \in L^\gamma(\Q), \ \vr_0 > 0, \ E_0 = \intO{ \left[ \frac{1}{2} \frac{|\vm_0|^2}{\vr_0} + P(\vr_0) \right] } < \infty.
\]
Let
\[
0 <\alpha <1 , \
-1 < \beta < \left( 1 - \frac{2}{\gamma} \right) - \frac{\alpha}{\gamma}.
\]
Finally, let $[\vr_{h_n}, \vm_{h_n} = \vr_{h_n} \vu_{h_n}]$ be a sequence of numerical solutions resulting from the
scheme \eqref{scheme_fv} with $h = h_n \searrow 0$.

Then there is a subsequence $n_k \to \infty$ such that:
\begin{itemize}
\item
\[
\delta_{ [\vr_{h_{n_k}}, \vm_{h_{n_k}} ] } \toKY \mathcal{V} \ \mbox{as}\ k \to \infty,
\]
where $\mathcal{V}$ is a Young measure on $(0,T) \times \Q$;
\item $\vr_k = \vr_{h_{n_k}}$, $\vm_k = \vm_{h_{n_k}}$ generate the measures $\mathfrak{M}_{\rm kin}$, $\mathfrak{M}_{\rm int}$,
$\mathfrak{M}_{\rm conv}$ via the limits in \eqref{N4};
\item the Young measure $\mathcal{V}$, together with the concentrations measures defined through \eqref{N7},
represent a dissipative measure--valued solution of the Euler system, with the initial data $\vr_0$, $\vm_0$, and the initial
energy $E(0-) = E_0$.
\end{itemize}

In particular,
\[
\begin{split}
\vr_{h_{n_k}} &\to \vr \ \mbox{weakly-(*) in}\ L^\infty(0,T; L^\gamma(\Q)), \\
\vm_{h_{n_k}} &\to \vm \ \mbox{weakly-(*) in}\ L^\infty(0,T; L^{\frac{2 \gamma}{\gamma + 1}}(\Q; R^d)),
\end{split}
\]
and
\begin{equation} \label{N8}
\begin{split}
E_{h_{n_k}} &\to E \ \mbox{pointwise in}\ [0,T],\\
\frac{1}{N} \sum_{k = 1}^N \vr_{h_{n_k}} &\to \vr \ \mbox{as}\ N \to \infty \ \mbox{a.a. in}\ (0,T) \times \Q,\\
\frac{1}{N} \sum_{k = 1}^N \vm_{h_{n_k}} &\to \vm \ \mbox{as}\ N \to \infty \ \mbox{a.a. in}\ (0,T) \times \Q,
\end{split}
\end{equation}
where $[\vr, \vm, E]$ is a dissipative solution of the Euler system with the initial data $\vr_0$, $\vm_0$, $E_0$.

\end{Theorem}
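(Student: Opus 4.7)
The plan is to verify the four items of Definition~\ref{ED1} for the quadruple $(\mathcal{V}, \mathfrak{C}_{\rm kin}, \mathfrak{C}_{\rm int}, \mathfrak{C}_{\rm conv})$ constructed in the paragraphs preceding the theorem, and then to read off the conclusion for the dissipative solution $[\vr,\vm,E]$ via barycenters.

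First I would extract the relevant subsequences. The discrete energy balance \eqref{N1} provides uniform bounds on $\vr_{h_n}$ in $L^\infty(0,T;L^\gamma(\Q))$ and on $\vm_{h_n}$ in $L^\infty(0,T;L^{2\gamma/(\gamma+1)}(\Q;R^d))$, whence $L^1(Q;S)$ boundedness. By Proposition~\ref{PP2} the Young measures $\delta_{[\vr_{h_n},\vm_{h_n}]}$ admit a subsequence that $\K$-converges to some Young measure $\mathcal{V}$, and by Proposition~\ref{PP1} the same subsequence narrowly converges. The weak-(*) limits \eqref{N4} are then secured along a further subsequence, and the defect measures are defined through \eqref{N7}. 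The compatibility condition \eqref{E8} follows from the rank-one structure of $\vm\otimes\vm/\vr$ together with the truncation argument via $\chi_\ep$ already displayed in the passage preceding the theorem.

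Second, I would pass to the limit $h_{n_k}\to 0$ in the consistency formulation \eqref{N2}--\eqref{N3} of Proposition~\ref{NP1}. The initial-data terms converge via $\Pim\vr_0\to\vr_0$ and $\Pim\vm_0\to\vm_0$. On the right-hand sides, the linear terms $\vr_{h_n}\partial_t\varphi$ and $\vm_{h_n}\cdot\partial_t\bfphi$ pass to the barycenter expressions $\langle\mathcal{V}_{t,x};\tvr\rangle\partial_t\varphi$ and $\langle\mathcal{V}_{t,x};\tvm\rangle\cdot\partial_t\bfphi$ by narrow convergence; the nonlinear terms $\vm_{h_n}\otimes\vm_{h_n}/\vr_{h_n}$ and $p(\vr_{h_n})$ pass to the corresponding Young measure integrals augmented by the concentration defects via \eqref{N4} and \eqref{N7}. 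The consistency errors $e_1$, $e_2$ and the diffusive correction $d(h,\bfphi)$ vanish by the estimates in Proposition~\ref{NP1} under the parameter restriction \eqref{Npar}. A routine density argument extends the test-function class from $C^3_c$ to $C^1$, yielding \eqref{E9} and \eqref{E10}. For the energy inequality \eqref{E11}, the discrete energies $t\mapsto E_{h_n}(t)$ are non-increasing, so Helly's selection theorem furnishes a further subsequence with pointwise convergence to a non-increasing limit $E$ on $[0,T]$; identification of $E(\tau)$ with the Young measure integral plus the total defect mass is immediate from \eqref{N7} tested against $\mathbf{1}_{\Q}$. This completes the verification that $(\mathcal{V},\mathfrak{C}_{\rm kin},\mathfrak{C}_{\rm int},\mathfrak{C}_{\rm conv})$ is a dissipative measure-valued solution, and setting $\vr,\vm$ equal to the barycenters of $\mathcal{V}$ yields a dissipative solution in the sense of Definition~\ref{ED2}.

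Finally, the weak-(*) convergences in \eqref{N8} follow from the uniform bounds combined with the narrow convergence already established. The pointwise Cesaro convergences are the delicate point and where I expect the main obstacle to lie: $\K$-convergence of $\delta_{[\vr_k,\vm_k]}$ only yields $\frac{1}{N}\sum g(\vr_k,\vm_k)\to\langle\mathcal{V}_{t,x};g\rangle$ a.e. for $g\in C_c(R^M)$, whereas the desired statement uses the unbounded linear functionals $g(\tvr,\tvm)=\tvr$ and $g=\tvm$. To close this gap I would truncate by compactly supported cutoffs $g_R$ agreeing with the identity on $|\tvU|\le R$, invoke the De la Vall\' e--Poussin criterion together with the $L^\infty(0,T;L^\gamma)$ and $L^\infty(0,T;L^{2\gamma/(\gamma+1)})$ bounds to secure equi-integrability of the partial averages of $\vr_k$ and $\vm_k$, and then let $R\to\infty$ to identify the a.e. limit with the barycenter $\vr$, respectively $\vm$.
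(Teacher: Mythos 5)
Your construction of the dissipative measure--valued solution follows the paper's own route essentially step by step: uniform bounds from the discrete energy balance \eqref{N1}, extraction of the weak-(*) limits \eqref{N4}, the Young measure $\mathcal{V}$ via Proposition~\ref{PP2} and Proposition~\ref{PP1}, the concentration defects \eqref{N7}, the compatibility condition \eqref{E8} by the rank--one structure and the $\chi_\ep$ truncation, the limit passage in the consistency formulation of Proposition~\ref{NP1} under the parameter restriction \eqref{Npar}, and Helly's theorem for the energy. All of that matches the paper and is sound.

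The one genuine issue is in your final paragraph, on the a.e.\ convergence of the Ces\`aro means in \eqref{N8}. Truncation by $g_R$ plus De la Vall\'ee--Poussin equi-integrability controls the truncation error $\frac1N\sum_{k\le N}|\vU_k|\mathbf{1}_{\{|\vU_k|>R\}}$ \emph{in $L^1(Q)$, uniformly in $N$}; combined with the a.e.\ convergence of $\frac1N\sum_{k\le N} g_R(\vr_k,\vm_k)$ this yields convergence of $\frac1N\sum_{k\le N}\vr_k$ and $\frac1N\sum_{k\le N}\vm_k$ to the barycenters \emph{in $L^1(Q)$} only, hence a.e.\ merely along a further subsequence of $N$'s. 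To upgrade the narrow convergence $\frac1N\sum_{k\le N}\delta_{\vU_k(y)}\toN\mathcal{V}_y$ to convergence of first moments \emph{at a fixed $y$}, you need $\limsup_N\frac1N\sum_{k\le N}|\vU_k(y)|\mathbf{1}_{\{|\vU_k(y)|>R\}}\to0$ as $R\to\infty$ for a.a.\ $y$ --- a pointwise-in-$y$ tightness of the empirical measures that does \emph{not} follow from equi-integrability over $Q$ (the maximal quantity $\sup_N\frac1N\sum_{k\le N}|\vU_k(y)|^p$ is only bounded in $L^1(Q)$, not a.e.\ finite for free). The paper does not attempt this derivation: it appeals directly to the Koml\'os--Balder result, namely the unnumbered Proposition following Definition~\ref{D-weak_K_conv}, whose ``in addition'' clause asserts that an $L^1$-bounded sequence admits a subsequence whose arithmetic means converge a.e.\ to the barycenter; that a.e.\ statement is part of the subsequence extraction itself (Koml\'os \cite{Kom}), not a consequence of the $\K$-convergence of the Dirac Young measures alone. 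The repair is simply to invoke that proposition when choosing $n_k$; your truncation argument then serves its legitimate purpose of identifying the a.e.\ limit with $\vr$ and $\vm$.
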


\begin{Remark} \label{NR1}

Theorem~\ref{NT1} holds even for $\vr \geq 0$.  In that case, the initial data for the approximate density must be taken
\[
\vr^0_h =
\Pim \vr_0 + h.
\]

\end{Remark}

The main novelty with respect to the existing results is the strong convergence of the arithmetic averages of the numerical solutions
established in \eqref{N8}. In view of the energy bounds, relation (\ref{N8}) implies
\[
\begin{split}
\frac{1}{N} \sum_{k = 1}^N \vr_{h_{n_k}} &\to \vr \ \mbox{as}\ N \to \infty \ \mbox{in}\ L^1((0,T) \times \Q),\\
\frac{1}{N} \sum_{k = 1}^N \vm_{h_{n_k}} &\to \vm \ \mbox{as}\ N \to \infty \ \mbox{in}\ L^1((0,T) \times \Q; R^d).
\end{split}
\]

\subsubsection{Unconditional convergence}

Theorem \ref{NT1} asserts convergence of the numerical scheme up to a subsequence. Unconditional convergence holds provided the
continuous Euler system admits a unique (dissipative) solution. According to the weak--strong uniqueness principle, this is the case
provided $\vr$ and $\vu$ belong to the regularity class \eqref{E12}. Combining Theorem \ref{NT1}, Proposition \ref{EP1}, and
Lemma \ref{SL1}, we obtain the following result.

\begin{Theorem} \label{NT2}
Let the initial data $\vr_0$, $\vm_0$ belong to the class
\[
\vr_0 \in L^\gamma(\Q), \ \vr_0 > 0, \ E_0 = \intO{ \left[ \frac{1}{2} \frac{|\vm_0|^2}{\vr_0} + P(\vr_0) \right] } < \infty.
\]
Suppose that the Euler system \eqref{E1}, \eqref{E2} admits a weak solution $\vr$, $\vm = \vr \vu$, where $\vr$ and $\vu$ belong to
the class \eqref{E12}.
Let $[\vr_{h_n}, \vm_{h_n} = \vr_{h_n} \vu_{h_n}]$ be a sequence of numerical solutions resulting from the
scheme \eqref{scheme_fv} with $h = h_n \searrow 0$, and
\[
0<\alpha <1,\
-1 < \beta < \left( 1 - \frac{2}{\gamma} \right) - \frac{\alpha}{\gamma}.
\]

Then
\[
\vr_{h_n} \to \vr\ \mbox{in}\ L^1((0,T) \times \Q), \ \vm_{h_n} \to \vm \ \mbox{in}\ L^1((0,T) \times \Q; R^d) \ \mbox{as}\ n \to \infty.
\]

\end{Theorem}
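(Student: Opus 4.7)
The plan is to combine the three main ingredients already available: Theorem \ref{NT1} (subsequential $\mathcal{K}$-convergence to a dissipative measure-valued solution), Proposition \ref{EP1} (weak--strong uniqueness), and Lemma \ref{SL1} (equivalence between narrow convergence of a sequence of Dirac Young measures, convergence in measure of the barycenters, and pointwise convergence of Cesaro means). The strategy is classical for such unconditional convergence statements: extract a subsequence, identify its limit via uniqueness with the given regular weak solution, then upgrade the mode of convergence to strong $L^1$, and finally argue that since every subsequence has the same limit, the full sequence converges.

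\textbf{Step 1 (subsequential limit).} Given the sequence $\{[\vr_{h_n},\vm_{h_n}]\}_{n=1}^\infty$, I would first apply Theorem \ref{NT1} to obtain a subsequence $n_k\to\infty$ such that $\delta_{[\vr_{h_{n_k}},\vm_{h_{n_k}}]}\toKY \mathcal{V}$, and such that $\mathcal{V}$ together with the concentration defect measures $\mathfrak{C}_{\rm kin},\mathfrak{C}_{\rm int},\mathfrak{C}_{\rm conv}$ defined by \eqref{N7} is a dissipative measure--valued solution of the Euler system with initial data $[\vr_0,\vm_0,E_0]$.

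\textbf{Step 2 (identification via weak--strong uniqueness).} Since the hypothesis supplies a weak solution $\widehat{\vr}=\vr$, $\widehat{\vm}=\vr\vu$ belonging to the regularity class \eqref{E12}, and since its initial data coincide with $[\vr_0,\vm_0,E_0]$ (the energy compatibility being automatic because $\widehat{\vr},\widehat{\vm}$ is a genuine weak solution satisfying the energy identity at $t=0$), Proposition \ref{EP1} applies and yields $\mathfrak{C}_{\rm kin}=\mathfrak{C}_{\rm int}=\mathfrak{C}_{\rm conv}=0$ together with $\mathcal{V}_{t,x}=\delta_{[\vr(t,x),\vm(t,x)]}$ a.e. in $(0,T)\times\Q$. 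In particular the $\mathcal{K}$-limit is a Dirac Young measure.

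\textbf{Step 3 (upgrade to strong $L^1$ convergence).} From Proposition \ref{PP1} the $\mathcal{K}$-convergence implies $\delta_{[\vr_{h_{n_k}},\vm_{h_{n_k}}]}\toNY \delta_{[\vr,\vm]}$. Applying Lemma \ref{SL1} (whose $L^1$-boundedness hypothesis \eqref{S12} is supplied uniformly by the energy bound \eqref{N1}, since $\vr_{h_n}\in L^\infty(0,T;L^\gamma)$ and $\vm_{h_n}\in L^\infty(0,T;L^{2\gamma/(\gamma+1)})$) gives convergence in measure on $(0,T)\times\Q$. Since $1<\gamma<2$ and the above energy bounds are super-linear, the family is equi-integrable in $L^1$ (by the de la Vall\'ee--Poussin criterion), so convergence in measure upgrades to convergence in $L^1((0,T)\times\Q)$ for $\vr_{h_{n_k}}$ and $L^1((0,T)\times\Q;R^d)$ for $\vm_{h_{n_k}}$.

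\textbf{Step 4 (full sequence).} The previous three steps show that every subsequence of $\{[\vr_{h_n},\vm_{h_n}]\}$ contains a further subsequence converging to the same limit $[\vr,\vm]$ in $L^1$. A standard Urysohn-type argument then forces the whole sequence to converge to $[\vr,\vm]$, which is the desired conclusion. The only point that requires genuine care is that the identification in Step 2 does not depend on the extracted subsequence; this is guaranteed because the limit $[\vr,\vm]$ is fixed by the hypothesis and by weak--strong uniqueness, independently of the subsequence chosen in Theorem \ref{NT1}. I expect Step 3, verifying equi-integrability carefully so that convergence in measure becomes $L^1$-convergence, to be the most delicate bookkeeping step, but it is essentially a routine consequence of the uniform energy bounds and $\gamma>1$.
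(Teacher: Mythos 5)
Your proposal is correct and follows exactly the route the paper intends: the paper's entire proof is the sentence ``Combining Theorem \ref{NT1}, Proposition \ref{EP1}, and Lemma \ref{SL1}, we obtain the following result,'' and your four steps (subsequential $\K-$limit via Theorem \ref{NT1}, identification of the limit as $\delta_{[\vr,\vm]}$ with vanishing concentration defects via weak--strong uniqueness, upgrade to convergence in measure via Lemma \ref{SL1} and then to $L^1$ via equi-integrability from the energy bounds, and the standard subsequence argument for the full sequence) are precisely the details being suppressed. No gaps.
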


Note that Lipschitz (strong) solutions of the Euler system with strictly positive density obviously belong to the class \eqref{E12}; whence Theorem \ref{NT2} yields unconditional strong convergence to the strong solution as long as the latter exists.

It is known, see e.g. Benzoni--Gavage and Serre \cite{BenSer}, Majda \cite{Majd}, that the Euler system admits local--in--time strong solutions
for sufficiently regular initial data, specifically,
\[
\vr_0 > 0, \ \vr_0 \in W^{k,2}(\Q),\ \vu_0 \in W^{k,2}(\Q; R^d),\ k > \left[ \frac{d}{2} \right] + 1.
\]
The strong solution exists on a maximal time interval  $[0,T_{\rm max})$ and stay regular as long as its derivatives remain uniformly bounded,
\[
\limsup_{t \to T_{\rm max}-} \| \Grad \vr \|_{L^\infty(\Q; R^d)} \to \infty \ \mbox{and/or} \
\limsup_{t \to T_{\rm max}-} \| \Grad \vu \|_{L^\infty(\Q; R^{d \times d})} \to \infty,
\]
 see, e.g., Alinhac \cite{Alinh}. Thus Theorem \ref{NT2} yields the following result.

\begin{Theorem} \label{NT3}
Let the initial data $\vr_0$, $\vm_0$ belong to the class
\[
\vr_0 > 0, \ \vr_0 \in W^{k,2}(\Q),\ \vu_0 \in W^{k,2}(\Q; R^d),\ k > \left[ \frac{d}{2} \right] + 1.
\]
Let $[\vr_{h_n}, \vm_{h_n} = \vr_{h_n} \vu_{h_n}]$ be a sequence of numerical solutions resulting from the
scheme \eqref{scheme_fv} with $h = h_n \searrow 0$, and
\[
0<\alpha <1,\
-1 < \beta < \left( 1 - \frac{2}{\gamma} \right) - \frac{\alpha}{\gamma}.
\]
In addition, suppose that
\begin{equation} \label{Lip}
\sup_{\sigma \in \mathcal{E}} \frac{ |\jump{\vr_{n_h}}| }{h} + \sup_{\sigma \in \mathcal{E}} \frac{ |\jump{\vu_{n_h}}| }{h} \leq L
\end{equation}
uniformly for $h_n \searrow 0$.

Then
\[
\vr_{h_n} \to \vr\ \mbox{in}\ L^1((0,T) \times \Q), \ \vm_{h_n} \to \vm \ \mbox{in}\ L^1((0,T) \times \Q; R^d) \ \mbox{as}\ n \to \infty
\]
where $\vr$, $\vu$ is a classical solution of the Euler system  \eqref{E1}, \eqref{E2}.

\end{Theorem}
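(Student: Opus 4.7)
The strategy is to deduce Theorem~\ref{NT3} from Theorem~\ref{NT2}, the only nontrivial task being to verify that a weak solution of the Euler system in the regularity class~\eqref{E12} exists on the full interval~$[0,T]$. The regularity assumed on the initial data guarantees, by the cited results of Benzoni--Gavage, Serre, and Majda, existence of a classical solution $[\vr,\vu]$ on a maximal interval $[0,T_{\rm max})$, which is automatically $C^1$ (indeed smooth) and hence belongs to the class~\eqref{E12} on any $[0,T^*]$ with $T^*<T_{\rm max}$. The plan is therefore to exploit the uniform discrete Lipschitz bound~\eqref{Lip} to show $T_{\rm max}>T$, after which Theorem~\ref{NT2} applies directly.

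Suppose for contradiction that $T_{\rm max}\leq T$. Fix $T^*<T_{\rm max}$. Since the classical solution on $[0,T^*]$ lies in the class~\eqref{E12}, Theorem~\ref{NT2} applied on $[0,T^*]$ yields $\vr_{h_n}\to\vr$ and $\vm_{h_n}\to\vm$ in $L^1((0,T^*)\times\Q)$; extracting a further subsequence we may assume these convergences hold pointwise almost everywhere on $(0,T^*)\times\Q$.

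The next step is to transfer the discrete Lipschitz bound~\eqref{Lip} to the limit. Since $\vr_{h_n}(t,\cdot)$ is constant on each cell $K\in\mathcal{T}$ and~\eqref{Lip} controls each face jump by $Lh$, a telescoping sum along a path of adjacent cells joining a cell containing~$x$ to a cell containing~$y$ yields
\[
|\vr_{h_n}(t,x)-\vr_{h_n}(t,y)|\leq L\bigl(|x-y|+c(d)\,h\bigr),
\]
and likewise for $\vu_{h_n}$. Evaluating at Lebesgue points and letting $n\to\infty$ along the almost everywhere convergent subsequence gives $|\vr(t,x)-\vr(t,y)|\leq L|x-y|$ and $|\vu(t,x)-\vu(t,y)|\leq L|x-y|$ for a.a.\ $t\in(0,T^*)$, i.e., a uniform Lipschitz bound on $\vr$ and $\vu$ on $(0,T^*)\times\Q$, with constant depending only on~$L$ and~$d$, independent of $T^*<T_{\rm max}$.

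This uniform bound on $\Grad\vr$ and $\Grad\vu$ up to time~$T_{\rm max}$ contradicts the blow-up criterion of Alinhac recalled before Theorem~\ref{NT3}, so necessarily $T_{\rm max}>T$, and Theorem~\ref{NT2} applied on $[0,T]$ delivers the conclusion. The main technical subtlety is the transfer of the discrete mesh-based Lipschitz bound to a continuous spatial Lipschitz estimate on the limit; this relies on the almost everywhere convergence extracted from Theorem~\ref{NT2} together with the uniform face-jump control~\eqref{Lip}.
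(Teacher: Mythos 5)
Your argument is exactly the paper's: apply Theorem~\ref{NT2} on $(0,T^*)$ for $T^*<T_{\rm max}$, use hypothesis~\eqref{Lip} to conclude that the limit is uniformly Lipschitz in space, and invoke the blow-up criterion to rule out $T_{\rm max}\leq T$. The paper compresses this into two sentences, whereas you usefully spell out the one nontrivial step it leaves implicit, namely the telescoping transfer of the discrete face-jump bound to a spatial Lipschitz estimate on the a.e.\ limit; this is correct (up to the harmless mesh-regularity factor $1/\lambda$ in the constant).
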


Indeed Theorem \ref{NT2} guarantees the strong convergence to the unique classical solution in $(0, T_{\rm max}) \times \Q$. In view of hypothesis \eqref{Lip}, the limit is uniformly Lipschitz; whence $T_{\rm max} = T$.

\subsubsection{Absence of smooth solution}

We conclude the discussion by presenting a negative result concerning the absence of the strong solution should the numerical solutions develop oscillations. In accordance with
Proposition \ref{PPP}, the strong convergence of numerical solutions $[\vr_{h_n}, \vm_{h_n}]$ implies
\[
[\vr_{h_n}, \vm_{h_n}] \toSK [\vr, \vm].
\]
This observation yields the following result.

\begin{Theorem} \label{NT4}

Under the hypotheses of Theorem \ref{NT2} suppose that there exists a function $g \in BC(R^{d+1})$ such that
\begin{equation} \label{N10}
\frac{1}{N} \sum_{k=1}^N g(\vr_{h_{n_k}}, \vm_{h_{n_k}}) \to G \ne g(\vr, \vm) \ \mbox{on a set of positive measure in}\ (0,T) \times \Q.
\end{equation}

Then the Euler system \eqref{E1}, \eqref{E2} does not admit a classical solution in $(0,T) \times \Q$ for the initial data $\vr_0$, $\vm_0$.

\end{Theorem}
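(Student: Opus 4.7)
The plan is a contradiction argument combining Theorem~\ref{NT2} with Proposition~\ref{PPP}. Suppose the Euler system \eqref{E1}, \eqref{E2} admits a classical solution $[\vr,\vu]$ on $(0,T)\times\Q$ with the data $[\vr_0,\vm_0]$. Such a solution is Lipschitz with density bounded away from zero, hence it belongs to the regularity class \eqref{E12}. Theorem~\ref{NT2} then applies to the full sequence and yields
\[
\vr_{h_n}\to\vr,\qquad \vm_{h_n}\to\vm\qquad \text{strongly in }L^1((0,T)\times\Q).
\]

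The next step is to upgrade this to strong $L^1$ convergence of the nonlinear composition with $g$. Since $g\in BC(R^{d+1})$ is bounded and continuous, every subsequence of $[\vr_{h_n},\vm_{h_n}]$ admits a further subsequence converging pointwise a.e., along which $g(\vr_{h_n},\vm_{h_n})\to g(\vr,\vm)$ a.e.\ by continuity; Lebesgue's bounded convergence theorem (with $|g|\le\|g\|_\infty$) then gives $L^1$ convergence along that sub-subsequence, and the subsequence-of-subsequence principle promotes this to $L^1$ convergence of the full sequence. Equivalently, this reflects Proposition~\ref{PPP}: strong $L^1$ convergence forces the $\K$-limit of $\delta_{[\vr_{h_n},\vm_{h_n}]}$, along any $\K$-converging subsequence supplied by Proposition~\ref{PP2}, to be the Dirac mass $\delta_{[\vr,\vm]}$.

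Restricted to the subsequence $n_k$ of the hypothesis, the triangle-inequality estimate used in the proof of Proposition~\ref{PPP} gives
\[
\left\|\frac{1}{N}\sum_{k=1}^{N} g(\vr_{h_{n_k}},\vm_{h_{n_k}}) - g(\vr,\vm)\right\|_{L^1(Q)} \le \frac{1}{N}\sum_{k=1}^{N}\|g(\vr_{h_{n_k}},\vm_{h_{n_k}})-g(\vr,\vm)\|_{L^1(Q)} \to 0,
\]
so the Cesaro averages converge to $g(\vr,\vm)$ in $L^1$, and hence pointwise a.e.\ along a further subsequence $N_j\to\infty$. Hypothesis~\eqref{N10} asserts pointwise convergence of the same Cesaro averages to $G$ on a set $A\subset(0,T)\times\Q$ of positive measure with $G\ne g(\vr,\vm)$ on $A$. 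Uniqueness of the pointwise limit on $A$ along the a.e.-converging subsequence forces $G=g(\vr,\vm)$ a.e.\ on $A$, a contradiction. There is no substantial obstacle here: once Theorem~\ref{NT2} is invoked, the argument is elementary; the only mild technical point is the passage from $L^1$ convergence of $[\vr_{h_n},\vm_{h_n}]$ to $L^1$ convergence of $g(\vr_{h_n},\vm_{h_n})$, which uses both continuity and boundedness of $g$.
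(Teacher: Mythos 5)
Your proposal is correct and follows essentially the same route as the paper: assume a classical solution exists, invoke Theorem~\ref{NT2} to get strong $L^1$ convergence of the full sequence, and then use the Cesaro-average estimate underlying Proposition~\ref{PPP} (strong$-\K$ convergence, i.e.\ the limit Young measure is $\delta_{[\vr,\vm]}$) to conclude that $\frac{1}{N}\sum_{k=1}^N g(\vr_{h_{n_k}},\vm_{h_{n_k}})\to g(\vr,\vm)$ a.e., contradicting \eqref{N10}. Your explicit dominated-convergence handling of $g\in BC(R^{d+1})$ (rather than $C_c$, as in the paper's definitions) is a useful extra detail but does not change the argument.
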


Note that the limit on the left--hand side of \eqref{N10} always exists as the numerical solutions $\K-$converge. However, the limit must be a Dirac mass $\delta_{[\vr, \vm]}$ applied to $g$ should the continuous solution
exist, in contrast with \eqref{N10}.

\section{Concluding remarks}
\label{CR}

We have extended the concept of $\K-$convergences to sequences of numerical solutions approximating the models of inviscid fluids in continuum fluid mechanics. We have also introduced a class of generalized solutions -- the dissipative solutions to the isentropic Euler system.

To illustrate the theoretical results, we have studied  a  finite volume method as an numerical approximation of the isentropic Euler system on a periodic spatial domain.  Note that a particular vanishing viscosity term
$$
d(h,\bfphi):= -h^\beta \int_0^T \sum_{\sigma \in \mathcal{E}} \int_\sigma \jump{ \vuh^k } \cdot
\jump{ \Pim \bfphi }\ \ds \D t
$$
allowed us to obtain suitable stability estimates on the discrete velocity $\vu_h$ and unconditional consistency of the scheme. Consequently,
we have shown that, up to a subsequence, the arithmetic averages of numerical solutions converge
pointwise a.a.~to a dissipative solution of the  Euler system. The convergence is unconditional provided the limit solution is unique.
We have also shown a simple criterion based on the oscillatory behavior of the numerical sequence that indicates absence of smooth solution to the limit system.

Finally, it is interesting to note that in the context of the isentropic Euler system considered in this paper,
the dissipative solutions discussed in Section \ref{DiSo} can be defined without making reference to the Young measure $\mathcal{V}$.
More specifically, we can include the ``oscillation'' defects
\[
\mathfrak{D}_{{\rm conv}} (t) \equiv \left< \mathcal{V}_{t,x}; \frac{ \tvm \otimes \tvm }{\tvr} \right> -
\frac{\vc{m_i} \otimes \vc{m}_j }{\vr}(t,x) ,\ \mathfrak{D}_{\rm int} \equiv
\left< \mathcal{V}_{t,x}; P(\tvr) \right> - P(\vr) (t,x)
\]
and
\[
\mathfrak{D}_{\rm kin}(t,x) \equiv \left< \mathcal{V}_{t,x}; \frac{1}{2} \frac{ |\tvm|^2 }{\tvr} \right>
-  \frac{1}{2} \frac{ |\vm|^2}{\vr}
\]
in $\mathfrak{C}_{\rm conv}$, $\mathfrak{C}_{\rm int}$,
and $\mathfrak{C}_{\rm kin}$, respectively. Indeed this is obvious for $\mathfrak{D}_{\rm int}$ as, by Jensen's
inequality and convexity of $P$,
\[
\mathfrak{D}_{\rm int} \geq 0.
\]
Moreover, again obviously,
\[
\left< \mathcal{V}_{t,x}; p(\tvr) \right> - p(\vr) (t,x) =
(\gamma - 1) \mathfrak{D}_{\rm int}(t,x) \ \mbox{for a.a.}\ (t,x) \in (0,T) \times \Q.
\]

Next we observe that $\mathfrak{D}_{\rm conv}$ is a symmetric matrix for a.a. $(t,x)$ with
\[
\frac{1}{2} {\rm trace} [\mathfrak{D}_{\rm conv}] = \left< \mathcal{V}_{t,x}; \frac{1}{2} \frac{ |\tvm|^2 }{\tvr} \right>
-  \frac{1}{2} \frac{ |\vm|^2}{\vr} = \mathfrak{D}_{\rm kin} \geq 0.
\]
Thus it remains to show that $\mathfrak{D}_{\rm conv}$ is positively definite. To this end, we write
\[
\mathfrak{D}_{\rm conv} : (\xi \otimes \xi) = \left< \mathcal{V}_{t,x}; \frac{ |\tvm \cdot \xi|^2 }{\tvr} \right> -
\frac{ |\vm \cdot \xi|^2 }{\vr} \geq 0,
\]
where we have used convexity of the function
\[
[\vr, \vm] \mapsto \frac{|\vm \cdot \xi|^2}{\vr}, \ \xi \in R^d.
\]
Consequently, without loss of generality, we may replace $\mathcal{V}$ in \eqref{E9}--\eqref{E11} by
$\delta_{[\vr, \vm]}$.

\bigskip \noindent
\textbf{Acknowledgement.} E. Feireisl and  H. Mizerov\' a would like to thank DFG TRR 146 Multiscale simulation methods for soft matter systems and the Institute of Mathematics, University Mainz for the hospitality.

\def\cprime{$'$} \def\ocirc#1{\ifmmode\setbox0=\hbox{$#1$}\dimen0=\ht0
  \advance\dimen0 by1pt\rlap{\hbox to\wd0{\hss\raise\dimen0
  \hbox{\hskip.2em$\scriptscriptstyle\circ$}\hss}}#1\else {\accent"17 #1}\fi}
\bibliographystyle{plain}

\end{document}